\renewcommand*{\backref}[1]{}
\renewcommand*{\backrefalt}[4]{%
    \ifcase #1 (Not cited.)%
    \or        (p.\,#2)%
    \else      (pp.\,#2)%
    \fi}
\begin{document}

\newtheorem{theorem}{Theorem}
\newtheorem{lemma}[theorem]{Lemma}
\newtheorem{example}[theorem]{Example}
\newtheorem{algol}{Algorithm}
\newtheorem{corollary}[theorem]{Corollary}
\newtheorem{prop}[theorem]{Proposition}
\newtheorem{definition}[theorem]{Definition}
\newtheorem{question}[theorem]{Question}
\newtheorem{problem}[theorem]{Problem}
\newtheorem{remark}[theorem]{Remark}
\newtheorem{conjecture}[theorem]{Conjecture}

\newcommand{\commM}[1]{\marginpar{%
\begin{color}{red}
\vskip-\baselineskip 
\raggedright\footnotesize
\itshape\hrule \smallskip M: #1\par\smallskip\hrule\end{color}}}

\newcommand{\commA}[1]{\marginpar{%
\begin{color}{blue}
\vskip-\baselineskip 
\raggedright\footnotesize
\itshape\hrule \smallskip A: #1\par\smallskip\hrule\end{color}}}
\def\xxx{\vskip5pt\hrule\vskip5pt}


\def\cA{{\mathcal A}}
\def\cB{{\mathcal B}}
\def\cC{{\mathcal C}}
\def\cD{{\mathcal D}}
\def\cE{{\mathcal E}}
\def\cF{{\mathcal F}}
\def\cG{{\mathcal G}}
\def\cH{{\mathcal H}}
\def\cI{{\mathcal I}}
\def\cJ{{\mathcal J}}
\def\cK{{\mathcal K}}
\def\cL{{\mathcal L}}
\def\cM{{\mathcal M}}
\def\cN{{\mathcal N}}
\def\cO{{\mathcal O}}
\def\cP{{\mathcal P}}
\def\cQ{{\mathcal Q}}
\def\cR{{\mathcal R}}
\def\cS{{\mathcal S}}
\def\cT{{\mathcal T}}
\def\cU{{\mathcal U}}
\def\cV{{\mathcal V}}
\def\cW{{\mathcal W}}
\def\cX{{\mathcal X}}
\def\cY{{\mathcal Y}}
\def\cZ{{\mathcal Z}}

\def\C{\mathbb{C}}
\def\F{\mathbb{F}}
\def\K{\mathbb{K}}
\def\G{\mathbb{G}}
\def\Z{\mathbb{Z}}
\def\R{\mathbb{R}}
\def\Q{\mathbb{Q}}
\def\N{\mathbb{N}}
\def\M{\textsf{M}}
\def\U{\mathbb{U}}
\def\P{\mathbb{P}}
\def\A{\mathbb{A}}
\def\p{\mathfrak{p}}
\def\n{\mathfrak{n}}
\def\X{\mathcal{X}}
\def\x{\textrm{\bf x}}
\def\w{\textrm{\bf w}}
\def\ovQ{\overline{\Q}}
\def\rank#1{\mathrm{rank}#1}
\def\wf{\widetilde{f}}
\def\wg{\widetilde{g}}
\def\comp{\hskip -2.5pt \circ  \hskip -2.5pt}
\def\({\left(}
\def\){\right)}
\def\[{\left[}
\def\]{\right]}
\def\<{\langle}
\def\>{\rangle}

\def\gen#1{{\left\langle#1\right\rangle}}
\def\genp#1{{\left\langle#1\right\rangle}_p}
\def\genPs{{\left\langle P_1, \ldots, P_s\right\rangle}}
\def\genPsp{{\left\langle P_1, \ldots, P_s\right\rangle}_p}

\def\e{e}

\def\eq{\e_q}
\def\fh{{\mathfrak h}}

\def\lcm{{\mathrm{lcm}}\,}

\def\l({\left(}
\def\r){\right)}
\def\fl#1{\left\lfloor#1\right\rfloor}
\def\rf#1{\left\lceil#1\right\rceil}
\def\mand{\qquad\mbox{and}\qquad}

\def\jt{\tilde\jmath}
\def\ellmax{\ell_{\rm max}}
\def\llog{\log\log}

\def\m{{\rm m}}
\def\ch{\hat{h}}
\def\GL{{\rm GL}}
\def\Orb{\mathrm{Orb}}
\def\Per{\mathrm{Per}}
\def\Preper{\mathrm{Preper}}
\def \S{\mathcal{S}}
\def\vec#1{\mathbf{#1}}
\def\ov#1{{\overline{#1}}}
\def\Gal{{\rm Gal}}

\newcommand{\bfalpha}{{\boldsymbol{\alpha}}}
\newcommand{\bfomega}{{\boldsymbol{\omega}}}

\newcommand{\Ch}{{\operatorname{Ch}}}
\newcommand{\Elim}{{\operatorname{Elim}}}
\newcommand{\proj}{{\operatorname{proj}}}
\newcommand{\h}{{\operatorname{h}}}

\newcommand{\hh}{\mathrm{h}}
\newcommand{\aff}{\mathrm{aff}}
\newcommand{\Spec}{{\operatorname{Spec}}}
\newcommand{\Res}{{\operatorname{Res}}}

\numberwithin{equation}{section}
\numberwithin{theorem}{section}

\def\house#1{{%
    \setbox0=\hbox{$#1$}
    \vrule height \dimexpr\ht0+1.4pt width .5pt depth \dimexpr\dp0+.8pt\relax
    \vrule height \dimexpr\ht0+1.4pt width \dimexpr\wd0+2pt depth \dimexpr-\ht0-1pt\relax
    \llap{$#1$\kern1pt}
    \vrule height \dimexpr\ht0+1.4pt width .5pt depth \dimexpr\dp0+.8pt\relax}}

\newcommand{\Address}{{
\bigskip
\footnotesize
\textsc{Centro di Ricerca Matematica Ennio De Giorgi, Scuola Normale Superiore, Pisa, 56126, Italy}\par\nopagebreak
\textit{E-mail address:} \texttt{marley.young@sns.it}
}}

\title[]
{On multiplicative dependence between elements of polynomial orbits}

\author[Marley Young] {Marley Young}

\subjclass[2020]{37F10, 37P15, 11N25, 11D41}

\begin{abstract}
We classify the pairs of polynomials $f,g \in \C[X]$ having orbits satisfying infinitely many multiplicative dependence relations, extending a result of Ghioca, Tucker and Zieve. Moreover, we show that given $f_1,\ldots, f_n$ from a certain class of polynomials with integer coefficients, the vectors of indices $(m_1,\ldots,m_n)$ such that $f_1^{m_1}(0),\ldots,f_n^{m_n}(0)$ are multiplictively dependent are sparse. We also classify the pairs $f,g \in \Q[X]$ such that there are infinitely many $(x,y) \in \Z^2$ satisfying $f(x)^k=g(y)^\ell$ for some 
(possibly varying) non-zero integers $k,\ell$.
\end{abstract}

\maketitle

\section{Introduction}

In complex dynamics, a topic of great importance is the behaviour of complex numbers $x$ as they are iterated under a polynomial $f \in \C[X]$. That is, we are interested in the \emph{orbits} $\cO_f(x) := \{ x,f(x),f(f(x)),\ldots \}$, and how they interact with each other, and the structure of the polynomial $f$. For example, the orbits of critical points largely determine the features of the global dynamics of the map, see \cite[\textsection 9]{Blanchard}. Points with finite orbit, called \emph{preperiodic points}, also provide a lot of information \cite[\textsection 3]{Blanchard}. On the other hand, less is known about the interaction between orbits of distinct polynomials. However, in \cite{GTZ2}, Ghioca, Tucker and Zieve prove the following remarkable fact.

\begin{theorem}\cite[Theorem~1.1]{GTZ2} \label{thm:GTZ}
Let $f,g \in \C[X]$ be polynomials which are not linear. If there exist $x,y \in \C$ such that the intersection $\cO_f(x) \cap \cO_g(y)$ is infinite, then $f$ and $g$ share a common iterate.
\end{theorem}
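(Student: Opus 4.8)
\emph{The plan.}

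\medskip

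\emph{Step 1: reductions.} A constant polynomial has finite orbits, so if $f$ or $g$ is constant the hypothesis is vacuous, and ``not linear'' rules out degree $1$; hence we may assume $\deg f,\deg g\ge 2$. Since a point has finite $f$-orbit exactly when it is preperiodic, the infinitude of $\cO_f(x)\cap\cO_g(y)$ forces $x$ (resp.\ $y$) to be non-preperiodic for $f$ (resp.\ $g$), and produces infinitely many pairs $(m_i,n_i)$ with $f^{m_i}(x)=g^{n_i}(y)$; passing to a subsequence we may assume $m_i$ and $n_i$ strictly increasing. Put $c:=f^{m_1}(x)=g^{n_1}(y)$, which is non-preperiodic for both $f$ and $g$; subtracting off the first exponents, we are reduced to the following situation: there is a point $c$, non-preperiodic for $f$ and for $g$, and an infinite strictly increasing sequence $(p_i,q_i)\in\Z_{\ge0}^2$ with $f^{p_i}(c)=g^{q_i}(c)$. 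All the data lives in a finitely generated subfield $K_0\subset\C$, which carries canonical local and global height functions for $f$ and $g$ (in particular $\hat h_f(c),\hat h_g(c)>0$) and embeds into $\C_p$ for suitable rational primes $p$; alternatively one reduces to $\overline\Q$ by a specialization argument.

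\medskip

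\emph{Step 2: controlling the exponents.} From $f^{p_i}(c)=g^{q_i}(c)$ and the functional equations of canonical heights,
\begin{equation*}
\hat h_f(c)\,(\deg f)^{p_i}=h\!\left(f^{p_i}(c)\right)+O(1)=h\!\left(g^{q_i}(c)\right)+O(1)=\hat h_g(c)\,(\deg g)^{q_i}+O(1),
\end{equation*}
so $p_i\log\deg f-q_i\log\deg g$ is bounded, indeed convergent; the pairs $(p_i,q_i)$ thus lie within bounded distance of a line of slope $\log\deg f/\log\deg g$. To convert this into an exact relation I would run the Bell--Ghioca--Tucker $p$-adic (Skolem--Mahler--Lech-type) method in two variables: for a well-chosen prime $p$, after replacing $f,g$ by suitable iterates and $c$ by a point of the common orbit lying in an appropriate residue disc, the functions $(k,\ell)\mapsto f^k(c)$ and $(k,\ell)\mapsto g^\ell(c)$ become $p$-adic analytic on $\Z_p^2$, hence so is their difference, which is not identically zero (else $f(c)=c$, contradicting non-preperiodicity); so its zero locus is a finite union of $p$-adic analytic curves, one of which carries infinitely many of the integer zeros $(p_i,q_i)$. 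Combining this with the height estimate forces those points onto an honest affine line $t\,p-s\,q=u$ with coprime $s,t\ge1$; in particular $(\deg f)^s=(\deg g)^t$. Setting $F:=f^s$, $G:=g^t$ (so $\deg F=\deg G\ge2$), writing $p_i=p_1+sk_i$, $q_i=q_1+tk_i$, and putting $c':=f^{p_1}(c)=g^{q_1}(c)$, we are reduced to: $F^{k_i}(c')=G^{k_i}(c')$ for infinitely many $k_i$, with $c'$ non-preperiodic.

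\medskip

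\emph{Step 3: the endgame.} Suppose for contradiction that $f$ and $g$ have no common iterate; then $F^M\ne G^M$ for every $M\ge1$. I would replace $c'$ by a point $F^{k_j}(c')=G^{k_j}(c')$ of the common orbit lying deep enough that the relevant orbit tails of $F$ and of $G$ sit in good residue discs, and apply the interpolation once more, now in one variable: there is an $M\ge1$ such that on each residue class $r+M\Z$ the function $k\mapsto F^k(c')-G^k(c')$ is $p$-adic analytic in $j$ (where $k=r+jM$); by pigeonhole infinitely many of the exponents fall in one class, so there the function has infinitely many zeros, hence vanishes identically by Strassmann's theorem. Therefore $F^{r+jM}(c')=G^{r+jM}(c')$ for all $j\ge0$; with $w:=F^r(c')=G^r(c')$ this gives $F^{jM}(w)=G^{jM}(w)$ for every $j$. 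The points $F^{jM}(w)$ are pairwise distinct (as $w$ is non-preperiodic), and both $F^M$ and $G^M$ map $F^{jM}(w)$ to $F^{(j+1)M}(w)$; hence $F^M=G^M$ as polynomials, i.e.\ $f^{sM}=g^{tM}$ — contradicting the assumption. One recognizes in this step the dynamical Mordell--Lang problem for the split polynomial map $F\times G$ on $\A^2$ applied to the diagonal.

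\medskip

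\emph{The main obstacle.} This is the $p$-adic interpolation of orbits underlying Steps 2 and 3. Polynomial maps of degree $\ge2$ are not étale, so one cannot invoke the étale-map form of dynamical Mordell--Lang; instead one must pass to iterates and to orbit tails landing in a residue disc on which the iterate acts as a $p$-adically analytic germ (attracting or parabolic), and producing and controlling this — together with the mixing of $p$-adic and archimedean (height) information that pins the exponent pairs to a line, which is where multiplicative dependence of the degrees is extracted — is the substantive part of the argument. (An alternative to Step 2 passes instead through the Ritt/Bilu--Tichy classification of the curves $f^m(X)=g^n(Y)$ possessing a component of genus $\le1$.) The reduction to a field with a well-behaved height theory (or to $\overline\Q$), and the bookkeeping of non-preperiodicity across the reductions, are routine but need care.
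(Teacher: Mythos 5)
The paper cites this theorem from \cite{GTZ2} without reproving it, remarking only that ``the proof of Theorem~1.1 makes crucial use of the work of Bilu and Tichy \cite{BT} on Diophantine equations in separated variables.'' Your Step~1 reductions and the canonical-height estimate opening Step~2 are correct and do match the first moves of Ghioca--Tucker--Zieve. But the route you then take --- a Bell--Ghioca--Tucker $p$-adic analytic-arc (Skolem--Mahler--Lech type) interpolation for the split map $(f,g)$ on $\A^2$, in Steps~2 and~3 --- is exactly what you flag as ``the main obstacle,'' and as stated the obstacle is fatal. The $p$-adic parametrization of orbits is available when the map is unramified, or when one can arrange an iterate to act on a residue disc as an attracting or indifferent analytic germ away from the critical set; for a general polynomial of degree $\geq 2$ this is precisely the open part of the Dynamical Mordell--Lang conjecture. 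This is why \cite{GTZ1}, which pursued the $p$-adic method, could only settle special cases, and why \cite{GTZ2} abandoned it in favor of the Bilu--Tichy classification \cite{BT} of separated-variable equations $f_1(x)=g_1(y)$ with infinitely many integral solutions, supplemented by a substantial collection of Ritt-type polynomial-decomposition lemmas and a specialization step from $\C$ to $\overline{\Q}$. The alternative you mention only parenthetically is therefore not a footnote; it is the proof.

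Two smaller gaps. First, the canonical-height identity gives only $p_i\log\deg f-q_i\log\deg g=O(1)$, i.e.\ proximity to a line; it does not by itself yield an exact affine relation, nor $(\deg f)^s=(\deg g)^t$ (if $\log\deg f/\log\deg g$ were irrational the integer pairs could approach the line without ever lying on an integer translate of it). In \cite{GTZ2} the multiplicative dependence of the degrees emerges from the Bilu--Tichy/Ritt analysis, not from heights alone. Second, your Step~3 endgame (two polynomials agreeing on infinitely many points coincide) is sound once one has $F^{kM}(w)=G^{kM}(w)$ for all $k$, but producing that conclusion from Strassmann's theorem requires the same $p$-adic analyticity that is blocked in the non-\'etale case.
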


One can interpret Theorem~\ref{thm:GTZ} as describing when $\cO_f(x) \times \cO_g(y)$ has infinite intersection with the diagonal $\Delta = \{(z,z): z \in \C \}$. The conclusion says that this occurs precisely when there exist positive integers $n,m$ such that the line $\Delta$ is preserved by the map $(f^n,g^m):\C^2 \to \C^2$, where $f^n$ denotes the $n$-fold composition
$$
f^n = \underbrace{f \circ \cdots \circ f}_{n \text{ times}}.
$$
In fact, this generalises to several polynomials and arbitrary lines, see \cite[Theorem~1.3]{GTZ2}. This resolves a special case of a version of the so-called \emph{dynamical Mordell-Lang conjecture}. Namely, Ghioca and Tucker conjectured the following \cite[Conjecture~1.3]{GTZ1}.

\begin{conjecture}
Let $f_1,\ldots,f_k$ be polynomials in $\C[X]$, and let $V$ be a subvariety of the affine space $\A^k$ which contains no positive dimensional subvariety of $\A^k$ that is periodic under the action of $(f_1,\ldots,f_k)$ on $\A^k$. Then $V(\C)$ has finite intersection with each orbit of $(f_1,\ldots,f_k)$ on $\A^k$.
\end{conjecture}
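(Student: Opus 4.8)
\emph{The plan} is to run the non-archimedean analytic arc method of Bell, Ghioca and Tucker in the split setting, reducing first to a number field and inducting on the number $k$ of maps; I will carry it as far as it goes and then isolate the step that is not available in general. Two cases serve as signposts: when $V$ is a line the conjecture is \cite[Theorem~1.3]{GTZ2}, and when $k=2$ it is a special case of Xie's theorem that the dynamical Mordell--Lang conjecture holds for every polynomial endomorphism of $\A^2$.

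\emph{Reductions.} We may assume $V$ is irreducible. A standard specialisation argument (as in the reduction to $\ovQ$ underlying Theorem~\ref{thm:GTZ}, cf.\ \cite{GTZ2}) lets us assume $f_1,\dots,f_k$ and $V$ are defined over $\ovQ$ and $P\in\A^k(\ovQ)$; write $\Phi=(f_1,\dots,f_k):\A^k\to\A^k$ and $Z=\{n\ge0:\Phi^n(P)\in V\}$, so that we must show $Z$ is finite unless $V$ contains a positive-dimensional $\Phi$-periodic subvariety. If some coordinate $a_i$ of $P$ has finite $f_i$-orbit, then $n\mapsto(\Phi^n(P))_i$ is eventually periodic, and partitioning $Z$ by the residue of $n$ modulo that period reduces the claim to a split map in $k-1$ variables acting on a slice of $V$; so by induction on $k$ we may assume every $a_i$ has infinite $f_i$-orbit and $\deg f_i\ge2$ (the cases $k=0$ and $V=\A^k$ being trivial). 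In particular the $\Phi$-orbit of $P$ is infinite, so all points $\Phi^n(P)$ are distinct.

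\emph{The $\p$-adic parametrisation and its consequences.} The heart of the argument is to choose a place $\p$ of $\ovQ$ above an odd rational prime $p$ such that $\p$ is of good reduction for all $f_i$, all $a_i$ are $\p$-integral, and---after passing to a common iterate $\Phi^N$ and a tail $\Phi^{n_0}(P)$---for each residue $r$ modulo $N$ the map $m\mapsto\Phi^{n_0+r+mN}(P)$, a priori defined for $m\in\Z_{\ge0}$, extends to a $\p$-adic analytic map $F_r:\Z_p\to\A^k(\C_p)$. Coordinate by coordinate this is the Bell--Ghioca--Tucker construction: $\ov{a_i}$ is preperiodic for $\ov{f_i}$, one replaces $f_i$ by a high enough iterate so that $f_i$ maps the residue disc $D_i$ of $a_i$ into itself and acts there congruently to the identity modulo a power of $\p$, whence $m\mapsto f_i^{\,mN}(a_i)$ is a $\p$-adically convergent power series in $m$; taking $N$ the least common multiple of the per-coordinate exponents and $n_0$ large enough to reach the periodic part for every $i$ yields the $F_r$. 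Granting this, $\{m\in\Z_p:F_r(m)\in V\}$ is the common zero locus of the $\p$-adic analytic functions $h\circ F_r$, as $h$ runs over a finite generating set of the ideal of $V$, so by Strassmann's theorem each such set is finite or all of $\Z_p$. If every one is finite, then $Z$ is finite and we are done. Otherwise some arc $F_r(\Z_p)$ lies in $V$; put $W=\overline{F_r(\Z_p)}$. Since $1+\Z_p=\Z_p$ and $F_r(m+1)=\Phi^N(F_r(m))$, the morphism $\Phi^N$---finite because each $f_i^N$ is non-constant---satisfies $\Phi^N(W)=W$, hence permutes the top-dimensional irreducible components of $W$, and a power of $\Phi$ fixes one such component $W'$. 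As $F_r$ is non-constant (the $\Phi^N$-orbit of $\Phi^{n_0+r}(P)$ being infinite), $W'$ is positive-dimensional, and $W'\subseteq W\subseteq V$: this is the periodic subvariety sought.

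\emph{The main obstacle.} Everything above is unconditional once the place $\p$ has been produced, and the difficulty lies precisely in that choice. The construction breaks down when the eventual cycle of some $\ov{a_i}$ modulo $\p$ contains a critical point of $\ov{f_i}$: the local model there is ramified, and no iterate of $f_i|_{D_i}$ is congruent to the identity. When no $f_i$ has a periodic critical point in $\A^1$, only finitely many places are dangerous for each coordinate, these can all be avoided simultaneously, and the conjecture follows---this is, in essence, the theorem of Benedetto, Ghioca, Kurlberg and Tucker. The polynomials that resist this are those with a periodic critical point; the power maps $x\mapsto x^d$ are the basic example, and for these the orbit $x,x^d,x^{d^2},\dots$ is exactly the multiplicatively structured sequence studied in the present paper, so that the missing non-archimedean input must be traded for control of $S$-unit equations and multiplicative dependence relations. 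Securing such control uniformly in $k$ and in $\dim V$ is the remaining content of the conjecture---open for $k\ge3$, with $k\le2$ being Xie's theorem---and it is here that the estimates developed in this paper are expected to be the relevant tool.
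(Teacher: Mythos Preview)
The statement you were asked to prove is not a theorem in the paper at all: it is explicitly labelled a \emph{conjecture} (the dynamical Mordell--Lang conjecture for split polynomial maps, attributed to Ghioca and Tucker \cite[Conjecture~1.3]{GTZ1}), and the paper offers no proof, partial or otherwise. It is mentioned purely as motivation for Theorem~\ref{thm:GTZ} and its generalisation Theorem~\ref{thm:MDGTZ}. So there is nothing in the paper to compare your proposal against.

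Your write-up is, accordingly, not a proof but an honest outline of the Bell--Ghioca--Tucker $p$-adic arc method together with a correct identification of the obstruction (coordinates with a periodic critical point modulo $\p$). That is fine as exposition, and you are right that the conjecture is open for $k\ge 3$. However, your closing paragraph overreaches: the claim that ``the estimates developed in this paper are expected to be the relevant tool'' is not supported by anything in the paper. The results here---Theorem~\ref{thm:MDGTZ}, Theorem~\ref{thm:MDBT1}, and the sparseness bounds of Theorems~\ref{thm:SparseIter} and~\ref{thm:MDZsig}---concern multiplicative dependence relations among orbit elements, not the intersection of a split orbit with a general subvariety $V$; the paper does not claim, and its methods do not suggest, any route through the DML obstruction you describe. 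If anything, the paper's own Question~\ref{Q} flags a much more modest multiplicative-dependence analogue for $n\ge 3$ as already ``very difficult''. You should drop that final speculative link, or at least not attribute it to the present paper.
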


This conjecture fits into Zhang's far-reaching system of dynamical conjectures \cite{Z}, and has more general formulations, see \cite[Question~1.6]{GTZ2}.

In this paper, we prove a generalisation of Theorem~\ref{thm:GTZ}, where we replace the diagonal $\Delta$ with a set $\mathscr{M}_{2,1}(\C)$ of points in $\C^2$ whose coordinates satisfy certain multiplicative relations. Instead of $f$ and $g$ necessarily having a common iterate, we will see that in this situation, some $\hat f$ and $\hat g$, belonging to explicitly computable families of polynomials forming certain semiconjugacies with $f$ and $g$ respectively, will have a common iterate. 

Let $n$ be a positive integer and let $\bm{\nu}=(\nu_1,\ldots,\nu_n) \in (\C^\times)^n$. We say that $\bm{\nu}$ is \emph{multiplicatively dependent} if all its entries are non-zero and there is a non-zero vector $\bm{k} = (k_1,\ldots,k_n) \in \Z^n$ for which
\begin{equation*}
\bm{\nu}^{\bm{k}} = \nu_1^{k_1} \cdots \nu_n^{k_n} = 1.
\end{equation*}
Otherwise we say that $\bm{\nu}$ is \emph{multiplicatively independent}. By convention we say that a vector $\bm{\nu} \in \C^n$ which has at least one zero entry is neither multiplicatively dependent nor independent. For a subset $T$ of $\C$, we denote by $\mathscr{M}_n(T)$ the set of multiplicatively dependent vectors with coordinates in $T$.

For $\bm{\nu} \in (\C^\times)^n$, we define $r$, the \emph{multiplicative rank} of $\bm{\nu}$, in the following way. If $\bm{\nu}$ has a coordinate which is a root of unity, we put $r=0$; otherwise let $r$ be the largest integer with $1 \leq r \leq n$ for which any $r$ coordinates of $\bm{\nu}$ form a multiplicatively independent vector. Note that $0 \leq r \leq n-1$ whenever $\bm{\nu}$ is multiplicatively dependent. For a subset $T$ of $\C$, we denote by $\mathscr{M}_{n,r}(T)$ the set of multiplicatively dependent vectors of rank $r$ with coordinates in $T$, and note that
$$
\mathscr{M}_n(T) = \mathscr{M}_{n,0}(T) + \cdots + \mathscr{M}_{n,n-1}(T).
$$


In \cite{KSSS}, Konyagin, Sha, Shparlinski and Stewart studied the distribution of $\mathscr{M}_n(T)$ when $T \subseteq \C$ is of number theoretic interest. Note that $\mathscr{M}_n(\C)$ has zero Lebesgue measure, since it is a countable union of hypersurfaces. However, the results of \cite{KSSS} imply that $\mathscr{M}_n(\C)$ is dense in $\C^n$. In particular, $\mathscr{M}_{2,1}(\C)$ is dense in $\C^2$ (since $\mathscr{M}_{2,0}(\C)$ is clearly nowhere dense). Thus our generalisation of Theorem~\ref{thm:GTZ} (Theorem~\ref{thm:MDGTZ} below) shows that some product of orbits $\cO_f(x) \times \cO_g(y)$ merely having infinite intersection with a dense subset of $\C^2$ is enough to have strong implications on the structure of the pair $(f,g)$.

Moreover, the mutliplicative dependence of complex numbers, and in particular of algebraic numbers, has also been studied from various other aspects, and is a subject of independent interest in the contexts of algebraic geometry and arithmetic dynamics. In \cite{BMZ}, Bombieri, Masser and Zannier initiated the study of intersections of algebraic curves with proper algebraic subgroups of the multiplicative group $\G_{\mathrm{m}}^n$. Since such subgroups of $\G_{\mathrm{m}}^n$ are defined by finite sets of equations of the form $X^{k_1} \cdots X^{k_n}=1$ (see \cite[Corollary~3.2.15]{BG}), the paper \cite{BMZ}, which leads into the paradigm of ``unlikely intersections'', really concerns multiplicative dependence of points on a curve. Further to this, in \cite{OSSZ}, the authors obtain finiteness results for multiplicatively dependent values of rational functions in the maximal abelian extension of a number field $K$, and also show that under certain conditions on a rational function $f \in K(X)$, there are only finitely many $\alpha \in K$ such that $(f^n(\alpha),f^m(\alpha))$ is multiplicatively dependent for some distinct $m,n \geq 0$ (these results have since been extended to hold modulo (approximate) finitely generated groups, see \cite{BBGMOS2,BOSS}). That is, multiplicative relations within orbits of a polynomial or rational function have implications for its structure. Note also that for non-exceptional (semigroups of) polynomials over the cyclotomic closure of a number field, only finitely many orbits contain multiplicative relations of rank 0 (i.e. roots of unity), see \cite{C,O,OY}, following work of Dvornicich and Zannier \cite{DZ}.

In a similar vein, we show that multiplicative relations between orbits of distinct polynomials imply some relationship between the polynomials themselves. For a polynomial $f \in \C[X]$ and a point $\alpha \in \C$, denote by $f_\alpha$ the polynomial $f_\alpha(X)=\alpha f(\alpha^{-1}X)$.

\begin{theorem} \label{thm:MDGTZ}
Let $f,g \in \C[X]$ be polynomials which are neither linear nor monomials (i.e. of the form $aX^d$). Then there exist $x,y \in \C$ such that there are infinitely many pairs $(z,w) \in \cO_f(x) \times \cO_g(y)$ which are multiplicatively dependent of rank 1 if and only if there exist:
\begin{itemize}
\item integers $s,t \geq 0$,
\item coprime integers $k,\ell \geq 1$,
\item positive integers $i,j \leq 2$, where we can take $i=1$ (resp. $j=1$) unless $\ell=2$ (resp. $k=2$),
\item $\tilde f, \tilde g \in \C[X]$, and
\item a root of unity $\xi$,
\end{itemize}
such that $f^i(X)= X^s \tilde f(X)^\ell$, $g^j(X)=X^t \tilde g(X)^k$, and $\hat f$ and $\hat g_\xi$ share a common iterate, where $\hat f(X)=X^s \tilde f(X^\ell)$, $\hat g(X)=X^t \tilde g(X^k)$.
\end{theorem}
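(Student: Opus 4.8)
The plan is to reduce Theorem~\ref{thm:MDGTZ} to Theorem~\ref{thm:GTZ} (the Ghioca--Tucker--Zieve theorem) by a carefully chosen change of variables, in both directions. For the ``if'' direction I would start from the hypothesis that $\hat f$ and $\hat g_\xi$ share a common iterate, say $\hat f^a = \hat g_\xi^b$. The key observation is the semiconjugacy relation linking $\hat f$ and $f^i$: if $\hat f(X) = X^s \tilde f(X^\ell)$ and $f^i(X) = X^s \tilde f(X)^\ell$, then setting $\pi(X) = X^\ell$ one checks directly that $\pi \circ \hat f = (f^i)^{?} \circ \pi$ is not quite right — rather $f^i \circ \pi(X) = f^i(X^\ell) = X^{s\ell}\tilde f(X^\ell)^\ell = \hat f(X)^\ell = \pi(\hat f(X))$, so $\pi$ semiconjugates $\hat f$ to $f^i$ (this is exactly the mechanism of the Böttcher-type/Ritt moves used implicitly in the statement). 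Thus an orbit of $\hat f$ maps under $X \mapsto X^\ell$ to an orbit of $f^i \subseteq f$, and similarly for $\hat g_\xi$, $g^j$ with $X \mapsto X^k$. If $z$ lies on a common orbit point of $\hat f^a = \hat g_\xi^b$, then $z^\ell \in \cO_f(\cdot)$ and $z^k \in \cO_g(\cdot)$, so the pair $(z^\ell, z^k)$ — more precisely $(z^\ell, (\xi^{-1}z)^k)$ after tracking the conjugation by $\xi$ — satisfies $(z^\ell)^k = (z^k)^\ell \cdot(\text{root of unity})$, a rank-$1$ multiplicative relation; infinitely many such $z$ on the shared orbit produce infinitely many such pairs. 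One must check the pairs are genuinely rank $1$ (not rank $0$), which is where the hypotheses that $f,g$ are not monomials and the parity conditions on $i,j$ enter.

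For the ``only if'' direction, suppose $(z_\mu, w_\mu) \in \cO_f(x) \times \cO_g(y)$ are infinitely many pairs with $z_\mu^{k_\mu} = w_\mu^{\ell_\mu}$ for varying coprime $k_\mu, \ell_\mu$. First I would apply a pigeonhole/specialisation argument: since there are infinitely many pairs but (a priori) infinitely many exponent vectors, I would argue that after passing to an infinite subset the exponent pair $(k,\ell)$ is \emph{constant} — this uses a height or Northcott-type argument if $f, g$ are defined over $\ovQ$, or more robustly a result from \cite{KSSS} or a direct argument bounding the exponents in terms of the (bounded) number of distinct prime factorisation patterns available; alternatively one exploits that $z_\mu \in \cO_f(x)$ forces $z_\mu = \zeta \cdot (\text{power of something})$ along a subsequence. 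With $(k,\ell)$ fixed and coprime, the relation $z^k = w^\ell$ means $(z,w)$ lies on the image of the curve $t \mapsto (t^\ell, t^k)$; so I parametrise $z = t^\ell$, $w = t^k$ and push the orbits back. The technical heart is to show that pulling $\cO_f(x)$ back along $X \mapsto X^\ell$ — i.e. writing down a polynomial $\hat f$ with $\hat f(X)^\ell = f^i(X^\ell)$ for suitable $i \le 2$ — is \emph{possible}, which requires that $f^i(X^\ell)$ be an $\ell$-th power up to the monomial correction $X^s$; this is precisely the Ritt/Engstrom-type result about when a composition becomes a perfect power, forcing the form $f^i(X) = X^s\tilde f(X)^\ell$ with the stated constraint $i \le 2$ (and $i=1$ unless $\ell = 2$, reflecting the classical exceptional case $X^2 \mapsto$ squares). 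Having built $\hat f$ and $\hat g$, the orbits $\cO_f(x), \cO_g(y)$ lift to orbits $\cO_{\hat f}(\cdot), \cO_{\hat g_\xi}(\cdot)$ (the root of unity $\xi$ absorbing the $\ell$-th vs $k$-th root ambiguity in the parametrisation), and the infinitely many multiplicatively dependent pairs become infinitely many points in $\cO_{\hat f}(\cdot) \cap \cO_{\hat g_\xi}(\cdot)$; Theorem~\ref{thm:GTZ} then yields the common iterate, provided $\hat f, \hat g_\xi$ are non-linear, which I would verify from $f, g$ being non-monomial and non-linear.

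The main obstacle I anticipate is twofold. First, the reduction of varying exponents $(k_\mu,\ell_\mu)$ to a single coprime pair $(k,\ell)$: one genuinely needs that only \emph{finitely many} coprime pairs can occur infinitely often, and controlling this in $\C[X]$ (not just over a number field) seems to require either descending to a finitely generated subfield and invoking specialisation, or a more clever direct argument using that consecutive orbit elements $f(z), z$ satisfy a fixed polynomial relation while $f(z)^{k'} = (\cdot)^{\ell'}$, squeezing the exponents. Second — and this is the real combinatorial core — establishing the precise shape $f^i(X) = X^s \tilde f(X)^\ell$ with $i \le 2$, $i = 1$ unless $\ell = 2$: this is a statement about when iterates of a polynomial become perfect powers, and I would prove it via the theory of the ``ramification portrait''/critical orbit structure, or by adapting Ritt's decomposition results, isolating the $\ell = 2$ case where a polynomial can fail to be a square but its square is (e.g. via $f = $ something times a square), which forces passage to the second iterate. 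The rest — the semiconjugacy bookkeeping, tracking $\xi$, the rank-$0$ versus rank-$1$ distinction, non-linearity of $\hat f, \hat g$ — is routine once these two points are in place.
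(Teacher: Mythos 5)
Your ``if'' direction matches the paper's: the semiconjugacy $f^i(X^\ell)=\hat f(X)^\ell$ (and likewise for $g,\hat g$) together with $\hat f^n=\hat g_\xi^m$ immediately manufactures infinitely many rank-$1$ pairs, as you say, and the bookkeeping with $\xi$ is just a computation. Your high-level plan for the ``only if'' direction also tracks the paper's architecture: pass to a finitely generated subfield and specialise to a number field, reduce to a single fixed coprime exponent pair $(k,\ell)$, force $f$ (or $f^2$) and $g$ (or $g^2$) into the shape $X^s\tilde f(X)^\ell$, lift orbits to $\hat f,\hat g_\xi$, and finish with Theorem~\ref{thm:GTZ}.

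However, there is a genuine gap at what you correctly call ``the real combinatorial core''. You propose to derive $f^i(X)=X^s\tilde f(X)^\ell$ from Ritt/Engstrom-type decomposition theory or from the critical orbit structure of $f$. But the shape constraint is \emph{not} an algebraic necessity: there are plenty of $f$ for which no iterate has this form, and for those the correct conclusion is simply that only finitely many multiplicatively dependent pairs exist. The shape constraint must therefore be extracted from the Diophantine hypothesis that infinitely many pairs exist, and Ritt theory has no hook on that hypothesis---it classifies functional decompositions, not the integrality of polynomial values. The paper instead argues by contraposition through two ingredients you do not invoke. First, Lemma~\ref{lem:IterLeVeque}, proved with the polynomial $ABC$-theorem (Lemma~\ref{lem:ABC}), shows that if $f$ is \emph{not} of the form $X^s p(X)^\ell$ (and $f^2$ is not either when $\ell=2$), then $(f^j,\ell)$ satisfies the LeVeque condition for some $j\le 6$. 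Second, Theorem~\ref{thm:Superell} (the superelliptic bounds of B\`erczes--Bugeaud--Evertse--Gy\"ory--Mello--Ostafe--Sha) then yields that $f^n(x)$ can be a near-$\ell$-th power only for boundedly many $n$, giving the required finiteness after the specialisation step. The same superelliptic bound (its first assertion, on the exponent $m$) is also what bounds $k,\ell$; your alternative suggestion of a KSSS- or pigeonhole-type argument does not obviously work, and in the paper this step is handled by Proposition~\ref{prop:MDGTZprelim}~(b) via the same Diophantine machinery. So: right skeleton, but you are missing the central analytic input (polynomial $ABC$ plus superelliptic bounds over $\mathfrak{o}_S$) that turns ``infinitely many'' into a structural constraint on $f$ and $g$, and the tool you propose in its place would not do the job.
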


We ignore multiplicatively dependent pairs of rank 0 since infinitely many can always be found. Indeed, let $x \in \C$ be such that $f(x)$ is a root of unity, and suppose $y$ is not preperiodic for $g$. Then $\{(f(x),g^n(y)) : n \in \Z^+\}$ is an infinite set of multiplicatively dependent points. We can similarly omit the case where $f$ or $g$ is a monomial. Suppose $f(X)=aX^k$, $a \in \C^\times$. If $a$ is a root of unity, let $x \in \C$ be not a root of unity. Then $(f^n(x),x)=(a^{(k^n-1)/(k-1)} x^{k^n},x)$ is multiplicatively dependent of rank 1 for all $n \in \Z^+$. If $a$ is not a root of unity, let $y \in \C$ be such that $g(y)=a$. Then $(f^n(a),g(y))=(a^{k^n+(k^n-1)/(k-1)},a)$ is multiplicatively dependent of rank 1 for all $n \in \Z^+$.

The proof of Theorem~\ref{thm:GTZ} makes crucial use of the work of Bilu and Tichy \cite{BT} on Diophantine equations in separated variables. Bilu and Tichy completely classify the polynomial pairs $f,g \in \Q[X]$ such that $f(x)=g(y)$ has infinitely many integral solutions (or more generally rational solutions with bounded denominator). As an aside, with an additional argument using bounds on solutions to superelliptic equations given in \cite{BBGMOS}, we are able to characterise the pairs of polynomials $f,g \in 
\Z[X]$ for which $(f(x),g(y))$ is multiplicatively dependent of rank 1 for infinitely many $x,y \in \Z$.

\begin{theorem} \label{thm:MDBT1}
Let $f,g \in \Z[X]$ each have at least two distinct roots. Then there are only finitely many $(x,y) \in \Z^2$ such that $(f(x),g(y))$ is multiplicatively dependent of rank 1 unless for some $(k,\ell)$ in an explicitly computable (in terms of $f$ and $g$) finite subset of $\Z^+ \times \Z^+$, we have $f(X)^k = \varphi(f_1(\lambda(X)))$ and $\pm g(X)^\ell = \varphi(g_1(\mu(X)))$, where $\lambda, \mu \in \Q[X]$ are linear, $\varphi \in \Q[X]$, and $(f_1,g_1)$ is a standard pair (see \textsection \ref{sec:BT}), such that $f_1(x)=g_1(y)$ has infinitely many rational solutions with bounded denominator.
\end{theorem}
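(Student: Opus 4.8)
The strategy is to convert the rank‑$1$ multiplicative dependence into a system of superelliptic equations, use bounds for such equations to pin the exponents down to a finite set, and then quote Bilu--Tichy.

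First I would fix a pair $(x,y)\in\Z^2$ with $(f(x),g(y))$ multiplicatively dependent of rank $1$. Since rank $1$ forbids either coordinate from being $0$ or a root of unity, and these coordinates are integers, this means $|f(x)|,|g(y)|\ge 2$. Starting from a dependence relation $f(x)^{a}g(y)^{b}=1$ with $(a,b)\neq(0,0)$ and dividing $(a,b)$ by its gcd, one gets $f(x)^{a_1}g(y)^{b_1}=\pm 1$ with $\gcd(a_1,b_1)=1$; both $a_1,b_1$ must be nonzero (else one of $f(x),g(y)$ would be $\pm1$), and they must have opposite signs (else the left side has absolute value $\ge 4$). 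Hence, after renaming, there are coprime $k,\ell\ge 1$ and $\varepsilon\in\{1,-1\}$ with $f(x)^k=\varepsilon\, g(y)^\ell$. Comparing $p$-adic valuations and using $\gcd(k,\ell)=1$ then forces $\ell\mid v_p(f(x))$ and $k\mid v_p(g(y))$ for all primes $p$, so $f(x)=\pm z^\ell$ and $g(y)=\pm w^k$ with $z,w\in\Z$, and $|z|,|w|\ge 2$ because $|f(x)|,|g(y)|\ge 2$.

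Next I would appeal to the effective bounds for superelliptic equations recorded in \cite{BBGMOS} (in the spirit of the Schinzel--Tijdeman theorem): since $f$ has at least two distinct roots, so do $\pm f$, and there is an explicitly computable $C(f)$ such that $f(X)=\pm Z^m$ has no integer solution with $|Z|\ge 2$ once $m>C(f)$; likewise for $g$ with some $C(g)$. Combined with the previous paragraph, this shows $\ell\le C(f)$ and $k\le C(g)$, so each rank‑$1$ multiplicatively dependent pair gives a triple $(k,\ell,\varepsilon)$ with $(k,\ell)$ in the explicit finite set $\{1,\dots,C(g)\}\times\{1,\dots,C(f)\}$. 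Assuming now that there are infinitely many such pairs $(x,y)$, the pigeonhole principle produces a single triple $(k,\ell,\varepsilon)$ realised infinitely often, so $F(X)=G(Y)$ has infinitely many solutions in $\Z^2$, where $F=f^k$ and $G=\varepsilon g^\ell$ lie in $\Q[X]$ and are non-constant. Bilu--Tichy applied to $(F,G)$ now yields linear $\lambda,\mu\in\Q[X]$, some $\varphi\in\Q[X]$, and a standard pair $(f_1,g_1)$ with $f_1(x)=g_1(y)$ having infinitely many rational solutions of bounded denominator, for which $f(X)^k=\varphi(f_1(\lambda(X)))$ and $\varepsilon\, g(X)^\ell=\varphi(g_1(\mu(X)))$; absorbing $\varepsilon$ into the sign gives the stated conclusion.

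The one genuinely new ingredient beyond the proof of Theorem~\ref{thm:GTZ} is the reduction in the second paragraph to the superelliptic shape $f(x)=\pm z^\ell$ with $|z|\ge 2$, together with the uniform bound on $\ell$ it produces via \cite{BBGMOS}; I expect this, rather than the routine valuation computation or the black‑box application of Bilu--Tichy, to be the heart of the matter. It is worth emphasising that the rank‑exactly‑$1$ hypothesis is precisely what forces $|f(x)|,|g(y)|\ge 2$, hence $|z|,|w|\ge 2$, which is what makes the superelliptic bounds bite; in the rank‑$0$ case the analogous statement is false, consistently with the examples given after Theorem~\ref{thm:MDGTZ}.
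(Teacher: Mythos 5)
Your proof is correct and reaches the stated conclusion, but it is worth comparing the route to the one the paper takes, since the paper in fact proves the more general $\mathfrak{o}_S$ version and then specializes. Over $\Z$ your simplifications are legitimate: the observation that rank $1$ forces $|f(x)|,|g(y)|\geq 2$ plays the role of the $S$-unit finiteness result \cite[Prop.~1.5]{KLS}, the absolute-value argument ruling out exponents of opposite sign replaces the choice of a place $v\notin S$ with $|f(x)|_v<1$, and your $p$-adic valuation extraction $f(x)=\pm z^\ell$, $g(y)=\pm w^k$ replaces the B\'ezout identity device $g(y)=(\zeta^{-b}g(y)^af(x)^b)^k$ used in the paper. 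The genuine divergence is how the finite exceptional set of exponent pairs is produced. You bound both $k$ and $\ell$ outright by the Schinzel--Tijdeman part of Theorem~\ref{thm:Superell} (with $b=\pm1$, so $N_S(b)=1$), pigeonhole down to a single $(k,\ell,\varepsilon)$, and then invoke Bilu--Tichy for $(f^k,\varepsilon g^\ell)$; this gives the coarse rectangle $\{1,\dots,C(g)\}\times\{1,\dots,C(f)\}$ as the explicit finite set. The paper instead only bounds $k$ this way and then brings in the LeVeque condition: when $(g,k)$ or $(f,\ell)$ satisfies LeVeque, the height bound in Theorem~\ref{thm:Superell} together with Northcott already forces finiteness of $(x,y)$, so Bilu--Tichy need only be applied for the smaller, structurally defined set $\cE(f,g)$ of pairs where both polynomials fail LeVeque. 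Your argument is simpler and entirely adequate for the statement as written; the paper's refinement buys a tighter exceptional set and is also what carries over to the $\mathfrak{o}_S$ setting, where the elementary absolute-value and valuation shortcuts you use are no longer available and Northcott becomes essential.
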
 

We give a version of this result over a ring of $S$-integers in an arbitrary number field in \textsection \ref{sec:BT}. A similar approach is also key to our proof of Theorem~\ref{thm:MDGTZ}, after we use a specialisation argument to reduce to the case where $f,g,x,y$ are defined over a number field.

An interesting direction for future research would be an extension of Theorem~\ref{thm:MDGTZ} to more than two polynomials.

\begin{question} \label{Q}
Let $f_1,\ldots,f_n \in \C[X]$ be polynomials which are not linear or monomials. If there exist $x_1,\ldots,x_n \in \C$ such that there are infinitely many $n$-tuples $(u_1,\ldots,u_n) \in \cO_{f_1}(x_1) \times \cdots \times \cO_{f_n}(x_n)$ which are multiplicatively dependent of rank at least 1, is there some multiplicative relation between some (possibly semiconjugated) iterates of the $f_i$?
\end{question}

This is likely a very difficult question, even in the case $n=3$. Indeed, we recall the following conjecture of Schinzel and Tijdeman \cite{SchTij}, which remains open (though it is known to hold assuming the $abc$ conjecture \cite{Walsh}).

\begin{conjecture}
If  a polynomial $f$ with rational coefficients has at least three simple zeros then the equation $y^3 z^2=f(x)$ has only finitely many solutions in integers $x,y,z$ with $yz \neq 0$.
\end{conjecture}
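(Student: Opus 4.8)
Since this is a long-standing open conjecture, I can only offer a \emph{conditional} plan: deduce it from the $abc$ conjecture over number fields, along the lines of Walsh. Clearing denominators (and absorbing the constant into $y,z$) we may assume $f\in\Z[X]$; as $yz\ne0$ forces $v_p(f(x))=3v_p(y)+2v_p(z)\ne1$ for every prime $p$, the value $f(x)$ is, up to sign, a \emph{powerful} integer. Fix three simple zeros $\alpha_1,\alpha_2,\alpha_3$ of $f$, put $K=\Q(\alpha_1,\alpha_2,\alpha_3)$, and choose a finite set $S$ of places of $K$ — containing the archimedean places and all places dividing the leading coefficient of $f$, the $\alpha_i$, the $\alpha_i-\alpha_j$ and $\operatorname{disc}f$ — large enough that $\cO_{K,S}$ is a principal ideal domain with finitely generated unit group. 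For $x\in\Z$ the ideals $(x-\alpha_1),(x-\alpha_2),(x-\alpha_3)$ are pairwise coprime in $\cO_{K,S}$ and their product divides $(y)^3(z)^2$, so each $(x-\alpha_i)$ is $S$-powerful; extracting $S$-squarefree kernels and choosing coset representatives for the unit group yields
\begin{equation*}
x-\alpha_i=\delta_i\,q_i^{2}\,c_i^{3}\qquad(i=1,2,3),
\end{equation*}
with $c_i$ $S$-squarefree and each $\delta_i$ ranging over a fixed finite set; pass to an infinite subsequence of putative solutions on which the $\delta_i$ are constant.

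\emph{Stage 1 (geometry, unconditional).} Suppose that along this sequence the cube-parts $c_i$ remain bounded. Then, after one more subsequence, $x-\alpha_i=\delta_i'q_i^{2}$ with $\delta_i'$ fixed, so $(x,q_1,q_2,q_3)$ is an $S$-integral point of the affine curve $C\subset\A^4$ cut out by $\delta_i'W_i^{2}=T-\alpha_i$ for $i=1,2,3$. Since $\alpha_1,\alpha_2,\alpha_3$ are distinct, $C$ is the fibre product over the $T$-line of three genuine double covers, hence absolutely irreducible of positive genus; Siegel's theorem then bounds its $S$-integral points, and as there are only finitely many ``types'' $(\delta_i',\,c_i)$ we conclude that $x$ takes finitely many values.

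\emph{Stage 2 (the $abc$ input).} It remains to bound the $c_i$. For each pair $i\ne j$ apply the $abc$ conjecture over $K$ to the relation $\delta_iq_i^{2}c_i^{3}-\delta_jq_j^{2}c_j^{3}=\alpha_j-\alpha_i$. Writing $|N_{K/\Q}(q_i)|=|x|^{[K:\Q]\beta_i+o(1)}$ and $|N_{K/\Q}(c_i)|=|x|^{[K:\Q]\gamma_i+o(1)}$, the norm identity $|N(x-\alpha_i)|\asymp|x|^{[K:\Q]}$ gives $2\beta_i+3\gamma_i=1+o(1)$ (with $\beta_i\le\tfrac12+o(1)$, as $c_i$ may be taken integral), while the $abc$ inequality — using that $c_i$ is $S$-squarefree, so its radical is comparable to $|N(c_i)|$ — forces $\beta_i+\beta_j\ge1-o(1)$. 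Hence $\beta_i=\tfrac12+o(1)$ and $\gamma_i=o(1)$ for all $i$, i.e. $|N(c_i)|$ grows more slowly than any power of $|x|$. The genuinely delicate point is to upgrade this to outright boundedness of the $c_i$ — which is what Stage 1 requires — and this seems to need either a quantitative $abc$ with controlled $\epsilon$-dependence (optimising $\epsilon$) or a bootstrap feeding the near-square shape of $x-\alpha_i$ back into the pairwise relations. That step, and the fact that without $abc$ the problem appears entirely out of reach, is where the whole difficulty lies: everything after it is just Siegel's theorem on the curve $C$.
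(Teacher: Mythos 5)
The statement you are asked about is not proved in this paper. It is the Schinzel--Tijdeman conjecture, and the surrounding text explicitly says that it ``remains open (though it is known to hold assuming the $abc$ conjecture \cite{Walsh})''. You correctly recognise this and offer only a conditional sketch, which is the appropriate thing to do; so the review below addresses the sketch itself rather than comparing it to a proof the paper does not contain.

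The reduction at the start is sound: after clearing denominators and discarding finitely many primes into $S$, the equation forces $v_\p(x-\alpha_i)\ne 1$ for every simple root $\alpha_i$ and every $\p\notin S$ (working in the splitting field so that distinct roots remain $\p$-coprime, and enlarging $S$ to kill ramification there), so each $x-\alpha_i$ is $S$-powerful, and the decomposition $x-\alpha_i=\delta_i q_i^2c_i^3$ with $c_i$ $S$-squarefree and $\delta_i$ in a finite set of $6$th-power coset representatives of $\cO_{K,S}^\times$ is legitimate. Stage~1 is also fine: the triple fibre product of the three double covers has genus $1$ (a Riemann--Hurwitz computation over the four branch points $\alpha_1,\alpha_2,\alpha_3,\infty$, where the inertia at $\infty$ is the diagonal $\Z/2\subset(\Z/2)^3$), so Siegel's theorem applies.

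The gap you flag at the end of Stage~2 is genuine, and I do not think either of your proposed repairs closes it. The pairwise $abc$ inequalities, as you note, give $\beta_i+\beta_j\geq 1-O(\epsilon)$ and hence $\gamma_i=O(\epsilon)$, i.e.\ $|N_{K/\Q}(c_i)|\leq |x|^{O(\epsilon)}$ for every fixed $\epsilon>0$; but this only says $|N(c_i)|=|x|^{o(1)}$ and cannot be converted into boundedness by optimising $\epsilon$, since the implicit threshold $X_\epsilon$ blows up as $\epsilon\to 0$. Nor does feeding effective bounds for $S$-integral points on the genus-one curves $\delta_i'W_i^2=T-\alpha_i$ (whose coefficients have height $\ll\log|N(c_i)|=o(\log|x|)$) help: Baker-type bounds give $h(x)\ll (\text{height of coefficients})^{\kappa}$, and $\log|x|\ll |x|^{o(1)}$ is no contradiction. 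More fundamentally, the entire strategy of reducing to a Siegel-type statement on a \emph{fixed} curve is likely not the route Walsh takes. Note that if $f(x)$ is $S$-powerful then $\mathrm{rad}(f(x))\ll|x|^{3/2+o(1)}$, and the pairwise $abc$ relations give exactly the matching lower bound $\mathrm{rad}(f(x))\gg|x|^{3/2-\epsilon}$ --- so at this level of precision there is simply no contradiction, and some additional idea is required. The usual device (a Langevin-type lower bound $\mathrm{rad}(f(x))\gg|x|^{2-\epsilon}$) comes from $abc$ triples whose largest term is of size $\asymp|x|^2$, e.g.\ the identity $n(n-2)+1=(n-1)^2$ when the roots are in arithmetic progression; constructing the analogous triple for an arbitrary cubic is the real content, and your sketch does not do it. I would not present Stage~2 as ``everything after it is just Siegel'' --- the missing step is the whole theorem.

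Two smaller points. First, the conjecture and Walsh's theorem concern $f$ with rational coefficients, so after clearing denominators you must allow $f(x)$ to be $S$-powerful rather than powerful, with $S$ fixed; this you implicitly do, but state it. Second, in Stage~1 it is worth saying explicitly that $x-\alpha_i=\delta_i'q_i^2$ with $\delta_i'$ a fixed nonsquare so that the fibre product is an honest $(\Z/2)^3$-cover; if some $\delta_i'$ were a square in $K(T)$ the corresponding component would drop genus.
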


The depth of this conjecture is evident as it implies the existence of infinitely many primes $p$ for which $2^{p-1} \not\equiv 1 \pmod{p^2}$. We would require an even stronger generalisation just to mimic the part of our proof of Theorem~\ref{thm:MDGTZ} which uses bounds on solutions to superelliptic equations. Furthermore, another key part of the proof of Theorem~\ref{thm:GTZ} is a collection of intricate polynomial decomposition results. The polynomial decomposition work required to tackle Question~\ref{Q} would be even more complicated, unless a totally different approach were discovered.

Hence, for now we consider the following statistical variant. Let $K$ be a field and suppose we have $F = (f_1,\ldots,f_n) \in K(X)^n$, $\bm{x} = (x_1,\ldots, x_n) \in K^n$, and $N \in \Z^+$, and let $M_{F,\bm{x}}(N)$ denote the number of $n$-tuples of integers $(m_1,\ldots,m_n) \in [1,N]^n$ such that $(f_1^{m_1}(x_1), \ldots, f_n^{m_n}(x_n))$ is multiplicatively dependent. Given integer polynomials which generate certain kinds of divisibility sequences, we are able to show that the tuples of indices which give a multiplicative dependence relation are at least sparse.

\begin{theorem} \label{thm:SparseIter}
Let $F = (f_1,\ldots,f_n) \in \Z[X]^n$, and $\bm{x} = (x_1,\ldots,x_n) \in \Z^n$ such that for each $1 \leq i \leq n$, $f_i$ is not linear nor a monomial, and $\{ f_i^m(x_i) \}_{m \geq 1}$ is an unbounded rigid divisibility sequence. Then $M_{F, \bm{x}}(N) \ll N^n/\log N$, with the implied constant depending only on $F$ and $\bm{x}$.
\end{theorem}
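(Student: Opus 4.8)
The plan is to combine the rigid‑divisibility structure of the orbits with a primitive prime divisor theorem, and then to run a short counting argument. Throughout write $a_{i,m} := f_i^m(x_i)$ and $d_i := \deg f_i$.

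First I would make two reductions. Since $f_i$ is not linear we have $d_i \ge 2$, and since the orbit $\{a_{i,m}\}_{m\ge1}$ is unbounded it escapes to infinity, so $|a_{i,m}| \to \infty$ (in particular no term vanishes: a zero term, together with the divisibility property, would force $0$ to be periodic for $f_i$ and hence the whole orbit bounded). Fix $M = M(F,\bm{x})$ with $|a_{i,m}| \ge 2$ for all $i$ and all $m \ge M$; the tuples in $[1,N]^n$ with some coordinate below $M$ number $O(N^{n-1})$, so it suffices to count tuples in $[M,N]^n$. Next I would record the structure we use: for each $i$ and each prime $p$ there is a rank of apparition $z_i(p) \in \Z^+\cup\{\infty\}$ with $p\mid a_{i,m}\iff z_i(p)\mid m$, and $v_p(a_{i,m}) = v_p(a_{i,z_i(p)})$ whenever $p\mid a_{i,m}$; and I would translate the hypothesis via the elementary fact that a tuple of integers all of absolute value $\ge 2$ is multiplicatively dependent if and only if the valuation vectors $\bm{w}_{i,m_i} = (v_p(a_{i,m_i}))_p$ are linearly dependent over $\Q$, i.e. $\sum_i k_i \bm{w}_{i,m_i} = 0$ for some nonzero $\bm{k}=(k_1,\dots,k_n)\in\Z^n$.

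The crucial input — and the step I expect to be the main obstacle — is a primitive prime divisor theorem: after enlarging $M$, every $a_{i,m}$ with $m\ge M$ has a prime factor $p_i(m)$ with $z_i(p_i(m)) = m$ (so distinct $m$ yield distinct primes $p_i(m)$). This should follow because, by rigidity, the imprimitive part $\prod_{p\mid a_{i,m},\,z_i(p)<m} p^{v_p(a_{i,m})}$ of $a_{i,m}$ divides $\prod_{d\mid m,\,d<m} a_{i,d}$, whose logarithm is at most $\sum_{d\le m/2}\log|a_{i,d}| \ll d_i^{m/2}$, whereas escape to infinity forces $\log|a_{i,m}| \gg d_i^{m}$ for $m$ large; hence the primitive part exceeds $1$. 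This is exactly where the hypotheses enter: ``not linear'' gives $d_i\ge2$ and, with unboundedness, the doubly‑exponential lower bound that dominates the imprimitive part, while rigidity (as opposed to plain multiplicativity) is what bounds that imprimitive part in the first place; the ``monomial'' exclusion rules out the remaining degenerate case, where the orbit, though large, has only boundedly many prime factors.

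Finally the count. Given a multiplicatively dependent tuple with all $m_i\in[M,N]$ and a relation $\sum_i k_i\bm{w}_{i,m_i}=0$ with $k_{i_0}\ne0$, reading off the $p_{i_0}(m_{i_0})$‑coordinate forces $p_{i_0}(m_{i_0})\mid a_{j_0,m_{j_0}}$ for some $j_0\ne i_0$, that is $z_{j_0}(p_{i_0}(m_{i_0}))\mid m_{j_0}$. Hence every such tuple lies in $\bigcup_{i_0\ne j_0} B_{i_0,j_0}$ where $B_{i_0,j_0} = \{\bm{m}\in[M,N]^n : z_{j_0}(p_{i_0}(m_{i_0}))\mid m_{j_0}\}$; fixing all coordinates other than $m_{j_0}$ freely and counting the admissible $m_{j_0}\le N$ gives
$$|B_{i_0,j_0}| \le N^{n-1}\sum_{m=M}^{N}\frac{1}{z_{j_0}(p_{i_0}(m))}$$
(the summand being $0$ when $z_{j_0}(p_{i_0}(m))=\infty$). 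To finish I would bound this sum: the primes $p_{i_0}(m)$ for $m\in[M,N]$ are distinct, and those with $z_{j_0}(p_{i_0}(m))\le D$ all divide $\prod_{d\le D}a_{j_0,d}$, which has $\ll d_{j_0}^{D}$ prime divisors; so, listing the values $z_{j_0}(p_{i_0}(m))$ in increasing order as $\zeta_1\le\zeta_2\le\cdots$, one gets $\zeta_k\gg\log k$ and therefore $\sum_{m}1/z_{j_0}(p_{i_0}(m)) \ll \sum_{2\le k\le N}1/\log k \ll N/\log N$. Summing $|B_{i_0,j_0}|\ll N^n/\log N$ over the finitely many pairs $(i_0,j_0)$ and adding back the $O(N^{n-1})$ degenerate tuples yields $M_{F,\bm{x}}(N)\ll N^n/\log N$, with implied constant depending only on $F$ and $\bm{x}$ (the case $n=1$ being trivial, since a single integer of absolute value $\ge2$ is never multiplicatively dependent).
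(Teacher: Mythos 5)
Your proof is correct, and it takes a genuinely different (and in one respect cleaner) route from the paper's. Where the paper works with the full largest square-free factor $Q$ of $f_i^{m_i}(x_i)$, lower-bounds $\log Q \gg N^{1/2}$ via Lemma~\ref{lem:LrgSqFree} (which itself rests on the effective superelliptic bounds of Theorem~\ref{thm:Superell}), and then observes that $m_j$ must be a multiple of $s_j = \mathrm{lcm}_{p\mid Q_j} s_{j,p}$, you instead isolate a \emph{single primitive prime} $p_{i_0}(m_{i_0})$ of $f_{i_0}^{m_{i_0}}(x_{i_0})$, whose existence you establish directly by comparing the doubly exponential growth of $\log|a_{i,m}|$ with the $\ll d_i^{m/2}$ size of the imprimitive part $\prod_{d\mid m,\, d<m} a_{i,d}$ — no Baker-theory input at all. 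This buys you two things. First, the argument is more elementary, bypassing Theorem~\ref{thm:Superell} and Lemma~\ref{lem:LrgSqFree} entirely. Second, and more substantively, the counting step is decoupled: the modulus $z_{j_0}(p_{i_0}(m_{i_0}))$ constraining $m_{j_0}$ is determined by $m_{i_0}$ alone, so fixing $m_{i_0}$ pins down a single arithmetic progression for $m_{j_0}$, and the distinctness of the primitive primes across $m_{i_0}\in[M,N]$ is what forces $\sum_{m} 1/z_{j_0}(p_{i_0}(m)) \ll N/\log N$. In the paper's version the modulus $s_j = \mathrm{lcm}_{p\mid Q_j}s_{j,p}$ depends on $Q_j$, which in turn depends on $m_j$, so the inference ``$s_j\gg\log N$, hence $O(N/\log N)$ possibilities for $m_j$'' is not a single-progression count and needs more care than the paper supplies; your primitive-prime decoupling sidesteps exactly that. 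One caveat, shared by both arguments: both invoke the rank-of-apparition property ($p\mid a_m$ if and only if $z(p)\mid m$, with constant valuation on the support), which is stronger than the divisibility-plus-rigid-valuation definition stated in \textsection\ref{subsec:RDS}. It does hold for the dynamically generated sequences $f^m(0)$ (via $f^{m+k}(0)\equiv f^k(0)\pmod{f^m(0)}$), but it is worth flagging as a standing assumption rather than leaving it implicit.
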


Above, and throughout the paper, the assertion $U \ll V$ is equivalent to $|U| \leq cV$ with some positive constant $c$. We recall the notion of a rigid divisibility sequence in \textsection \textsection \ref{subsec:RDS}, and give examples of classes of polynomials which generate them.  

Furthermore, let us note that in the special case $f_1 = \cdots = f_n$, $x_1=\cdots=x_n$, we can treat rational functions over a number field or function field (assuming the $abc$-conjecture of Masser-Oesterl\'{e}-Szpiro \cite{V}, or some conditions on $(f,x)$ in the number field case) and achieve a better bound by using results on primitive prime divisors in dynamical sequences.

\begin{theorem} \label{thm:MDZsig}
Let $K$ be a number field or characteristic zero function field of transcendence degree 1, let $f \in K(X)$ be a rational function which is not linear or a monomial, and let $x \in K$ be a point which is not preperiodic for $f$. Let $F=(f,\ldots,f) \in K(X)^n$, and let $\bm{x}=(x,\ldots,x) \in K^n$. If $K$ is a number field, assume that either $K$ satisfies the $abc$-conjecture, or that $0$ is periodic for $f$ and $f$ does not vanish to order $\deg f$ at 0. Then $M_{F,\bm{x}}(N) = \Theta(N^{n-1})$.
\end{theorem}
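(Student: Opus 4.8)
The plan is to prove the matching bounds $M_{F,\bm{x}}(N)\gg N^{n-1}$ and $M_{F,\bm{x}}(N)\ll N^{n-1}$ separately, the first by an elementary diagonal construction and the second by induction on $n$, with the existence of primitive prime divisors in the dynamical sequence $a_m:=f^m(x)$ as the essential arithmetic input. We may take $n\ge 2$ (the case $n=1$ is absorbed into the base case below). Since $f$ is not linear, $\deg f\ge 2$, and since $x$ is not preperiodic the orbit $\{a_m\}_{m\ge 0}$ is infinite; moreover $a_m\in K^\times$ for all but at most one index $m$ --- if $a_m=0$ then, $0$ being periodic, $x$ would be preperiodic, while the orbit meets $\infty$ at most once --- so, discarding that index (which affects none of the asymptotics), we assume $a_m\in K^\times$ throughout. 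For the lower bound, every $(m_1,\dots,m_n)\in[1,N]^n$ with $m_1=m_2$ satisfies $a_{m_1}a_{m_2}^{-1}=1$, so $(a_{m_1},\dots,a_{m_n})$ is multiplicatively dependent, and there are $\gg N^{n-1}$ such tuples.

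The upper bound rests on the following, which I would invoke as a black box: by the results on primitive prime divisors in dynamical sequences --- holding unconditionally over a function field, and over a number field either under the $abc$-conjecture or under the hypothesis that $0$ is periodic for $f$ and $f$ does not vanish to order $\deg f$ at $0$ --- there are a finite set $S$ of places of $K$ and an integer $m_0$ such that for every $m\ge m_0$ the value $a_m$ has a \emph{primitive prime divisor}: a place $\p\notin S$ with $v_\p(a_m)>0$ and $v_\p(a_j)=0$ for all $0\le j<m$. (In the function-field case this uses $\hat{h}_f(x)>0$, which holds when $x$ is not preperiodic unless $f$ is isotrivial; the isotrivial case reduces to a statement over the constant field and would be handled separately.)

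Granting this, I argue by induction on $n$. For $n=1$, $M_{F,\bm{x}}(N)=\#\{m\le N: a_m\text{ is a root of unity}\}=O(1)$, since roots of unity have height $0$ by Kronecker's theorem whereas $h(a_m)\asymp(\deg f)^m\to\infty$. For the inductive step, split the tuples counted by $M_{F,\bm{x}}(N)$ according to $m:=\max_i m_i$. \textbf{Case (i):} $m<m_0$, contributing only $O(1)$ tuples. \textbf{Case (ii):} $m\ge m_0$ with the maximum attained at two or more coordinates; crudely there are at most $\binom{n}{2}\sum_{m\le N}m^{n-2}\ll N^{n-1}$ of these. \textbf{Case (iii):} $m\ge m_0$ with the maximum attained at exactly one coordinate $i_0$; then $m_i<m$ for $i\ne i_0$, so choosing a relation $\prod_i a_{m_i}^{k_i}=1$ with $\bm{k}=(k_1,\dots,k_n)\ne\bm{0}$ and applying $v_\p$ for a primitive prime divisor $\p$ of $a_m$, all terms with $i\ne i_0$ vanish, forcing $k_{i_0}v_\p(a_m)=0$ and hence $k_{i_0}=0$. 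Thus the deleted subtuple $(a_{m_i})_{i\ne i_0}$ is multiplicatively dependent, so the number of Case (iii) tuples is at most $\sum_{i_0=1}^{n}\sum_{m=m_0}^{N}M_{F,\bm{x}}^{(n-1)}(m-1)\le nN\,M_{F,\bm{x}}^{(n-1)}(N)\ll N^{n-1}$ by the inductive hypothesis, where $M_{F,\bm{x}}^{(n-1)}$ is the analogous counting function in $n-1$ variables (same $f$, same $x$). Adding the three cases yields $M_{F,\bm{x}}(N)\ll N^{n-1}$, completing the induction.

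The main obstacle is the primitive prime divisor statement in precisely the form used here: one needs $v_\p(a_j)=0$ --- not merely $v_\p(a_j)\le 0$ --- for the earlier indices $j<m$, and the exceptional set $S$ must be chosen independently of $m$. Extracting this from the existing literature requires care with the places dividing the denominators of the $a_j$ and with places of bad reduction, and, over function fields, with disposing of the isotrivial case; essentially all of the arithmetic content of the theorem is concentrated in this point, the rest being the combinatorial bookkeeping above.
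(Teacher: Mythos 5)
Your proposal follows essentially the same route as the paper: the key arithmetic input in both is the primitive prime divisor theorem of Gratton--Nguyen--Tucker and Ingram--Silverman (the paper cites these as \cite[Theorem~1.1]{GNT} and \cite[Theorem~7]{IS}), combined with the observation that a multiplicative relation involving a term possessing a primitive prime divisor must degenerate. The differences are organisational and one small but worthwhile point of precision.

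\emph{Bookkeeping.} The paper dispenses with induction: it discards at once all tuples with some $m_i \le B$, some $m_i = m_j$, or some $f^{m_i}(x)$ a root of unity (all of which contribute $O(N^{n-1})$), and then shows that the remaining tuples are not multiplicatively dependent at all. The crucial twist is that the paper applies the primitive prime divisor not to the globally largest $m_i$, but to the largest $m_i$ with $k_i \neq 0$ in the chosen relation $\prod f^{m_i}(x)^{k_i}=1$; since the relation is of rank $\ge 1$ (no coordinate a root of unity), at least two $k_i$ are nonzero, and the contradiction is then immediate with no recursion on $n$. Your version peels off the globally largest coordinate, concludes $k_{i_0}=0$, and recurses on the remaining $n-1$ coordinates; this works and is perfectly sound, but the paper's choice avoids the induction entirely. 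Both yield $\ll N^{n-1}$.

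\emph{The valuation subtlety.} You correctly flag that for the contradiction one wants $v_\p(a_j)=0$ at the primitive place for the smaller indices $j$, not merely $v_\p(a_j)\le 0$ as in the paper's stated definition of primitive prime divisor. This is a genuine point: since $f$ is a rational map, earlier iterates $a_j$ can have poles at $\p$, and then a negatively-valued term could in principle balance the relation without forcing $k_{i_0}=0$. The paper's one-line deduction (``each prime $\p$ dividing $f^{m_i}(x)$ must divide some $f^{m_j}(x)$'') glides over this, and you are right that one must either extract the stronger form from [GNT]/[IS] (namely a primitive place lying outside the finite set controlling poles of the orbit and bad reduction, so that the earlier valuations vanish rather than merely being non-positive) or supply a short additional argument. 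You have identified the one place where the proof has real content beyond combinatorics, treated it as a black box, and said so explicitly --- which is the honest thing to do. Overall your proposal is correct modulo that black box and is essentially the paper's argument with a slightly different recursion.
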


Here $U=\Theta(V)$ means $U \ll V$ and $V \ll U$. It is clear that $M_{F,\bm{x}}(F) \gg N^{n-1}$ in this case, since we can just take $m_i = m_j$ for some $i \neq j$ and let all the other coordinates run freely over $[1,N]$.

This paper is structured as follows: In \textsection \ref{sec:prelim} we present some useful tools and preliminary results, including the polynomial $ABC$-theorem, and bounds on integral solutions to superelliptic equations. In \textsection \ref{sec:BT}, we state Bilu and Tichy's results on Diophantine equations in separated variables and prove a more general form of Theorem~\ref{thm:MDBT1}. The proof of Theorem~\ref{thm:MDGTZ} is given in \textsection \ref{sec:GTZ}. Finally, in \textsection \ref{sec:Sparse} we give some background on rigid divisibility sequences and primitive prime divisors in arithmetic dynamics, and prove Theorems~\ref{thm:SparseIter}~and~\ref{thm:MDZsig}.

\subsection*{Acknowledgements} The author is very grateful to Alina Ostafe and Igor Shparlinski for comments on initial drafts of the paper. The latter also gave the suggestion of considering Theorem~\ref{thm:MDZsig}.

\section{Preliminaries} \label{sec:prelim}

We collect here some notions and results which will be useful subsequently.

\subsection{The polynomial ABC-theorem}

We recall the polynomial $ABC$-theorem (proved first by Stothers \cite{Stothers}, then independently by Mason \cite{Mason} and Silverman \cite{Silverman}).

\begin{lemma} \label{lem:ABC}
Let $K$ be a field and let $A,B,C \in K[X]$ be relatively prime polynomials such that $A+B+C=0$ and not all of $A, B$ and $C$ have vanishing derivative. Then
$$
\deg \mathrm{rad}(ABC) \geq \max \{ \deg A, \deg B, \deg C \} + 1,
$$
where, for $f \in K[X]$, $\mathrm{rad} \, f$ is the product of the distinct monic irreducible factors of $f$.
\end{lemma}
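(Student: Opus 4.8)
The plan is to run the classical Wronskian argument. Set $W := AB' - A'B$. Differentiating $A+B+C=0$ gives $A'+B'+C'=0$, and a short computation using $C=-(A+B)$ shows that $W = AB'-A'B = BC'-B'C = CA'-C'A$, so that $W$ is, up to sign, the Wronskian of any two of the three polynomials.

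First I would check that $W \neq 0$. If $W=0$ then $AB'=A'B$; since $\gcd(A,B)=1$ this forces $A \mid A'$, which is impossible for degree reasons unless $A'=0$. But then $B'=0$ as well, hence $C'=-(A'+B')=0$, contradicting the hypothesis that not all of $A,B,C$ have vanishing derivative. (One also notes here that $A,B,C$ are all nonzero, since otherwise two of them would fail to be coprime.)

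The crux is the divisibility step. Let $P$ be a monic irreducible factor of $ABC$; by pairwise coprimality it divides exactly one of $A,B,C$, say $A$, with multiplicity $e \ge 1$. Writing $A=P^e U$ with $P \nmid U$ gives $A'=P^{e-1}(eP'U+PU')$, so $P^{e-1}\mid A'$; together with $P^e \mid A$ and $P\nmid B$ this yields $P^{e-1}\mid W = AB'-A'B$. Since $P$ divides $\mathrm{rad}(ABC)$ exactly once, $P^{e-1}$ is precisely the $P$-part of $ABC/\mathrm{rad}(ABC)$. Ranging over all such $P$ and using that distinct irreducible factors are coprime, I conclude that $ABC/\mathrm{rad}(ABC)$ divides $W$ (when $P$ divides $B$ or $C$ one uses instead the expression $W = CA'-C'A$ or $W = BC'-B'C$).

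Finally I would compare degrees. Each of $AB'$ and $A'B$ has degree at most $\deg A + \deg B - 1$, and similarly with the other two pairs, so since $W \neq 0$ we get $\deg W \le \min\{\deg A+\deg B,\ \deg A+\deg C,\ \deg B+\deg C\}-1$. Combining with $\deg(ABC/\mathrm{rad}(ABC)) \le \deg W$ gives $\deg A+\deg B+\deg C - \deg\mathrm{rad}(ABC) \le \deg A + \deg B - 1$, i.e. $\deg C \le \deg\mathrm{rad}(ABC)-1$, and symmetrically $\deg A, \deg B \le \deg\mathrm{rad}(ABC)-1$; taking the maximum finishes the proof. I expect the only real subtlety — and the reason the derivative hypothesis is needed — to be the positive-characteristic bookkeeping in the non-vanishing of $W$; in characteristic zero it is automatic as soon as some $\deg A, \deg B, \deg C$ is positive.
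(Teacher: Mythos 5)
The paper does not actually prove this lemma: it is quoted as a known result (the Mason--Stothers theorem) with citations to Stothers, Mason, and Silverman, so there is no in-paper argument to compare against. On its own merits, your Wronskian proof is correct and complete, and it is essentially Mason's (and Silverman's) classical argument. The identity $W=AB'-A'B=BC'-B'C=CA'-C'A$ checks out, the nonvanishing of $W$ is correctly derived from the coprimality of $A,B$ together with the hypothesis that not all derivatives vanish (and this is indeed precisely where that hypothesis is needed, so your closing remark about positive characteristic is apt), the local computation $P^{e-1}\mid A'$ is right even when $P'=0$ or $\operatorname{char} K\mid e$, and the degree bookkeeping at the end is correct. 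One small point worth stating explicitly rather than assuming: the hypothesis says ``relatively prime,'' which a priori could mean only $\gcd(A,B,C)=1$; but since $A+B+C=0$, any common irreducible factor of two of them divides the third, so $\gcd(A,B,C)=1$ does force pairwise coprimality, which is what you use throughout (both to localise each irreducible to a single factor and to rule out any of $A,B,C$ being zero). For context, Stothers' original proof goes via a genuinely different route (ramification counting / Riemann--Hurwitz for the Belyi-type map $A/{-C}$), whereas yours is the elementary Wronskian route; both give the same bound, but the Wronskian argument is shorter and works verbatim over any field, which is why it is the one usually reproduced.
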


\subsection{Bounds on solutions to superelliptic equations} \label{subsec:BBGMOS}

Let $K$ be a number field, and let $S$ be a finite set of places of $K$ containing the archimedean ones. Denote by $\cO_S$ the ring of $S$-integers in $K$. Let
$$
f(X) = a_0 X^n + a_1 X^{n-1} + \cdots + a_n \in \mathfrak{o}_S[X]
$$
be a polynomial of degree $n \geq 2$. Let $b$ be a non-zero element of $\mathfrak{o}_S$, $m \geq 2$ and integer and consider the equation
\begin{equation} \label{eq:Superell}
f(x)=by^m, \qquad x,y \in \mathfrak{o}_S.
\end{equation}
Assume that in some finite extension of $K$, $f \in \mathfrak{o}_S[X]$ factorises as
$$
f(X)=a_0 \prod_{i=1}^r (X-\alpha_i)^{e_i}
$$
with distinct $\alpha_1,\ldots,\alpha_r$. Put
$$
m_i = \frac{m}{\gcd(m,e_i)}, i=1,\ldots,r,
$$
and assume that $m_1 \geq m_2 \geq \cdots \geq m_r$. Improving a classical result of Siegel \cite{Siegel}, LeVeque \cite{LeVeque} showed that if $S$ consists only of infinite places and $b=1$ then \eqref{eq:Superell} has only finitely many solutions, provided $(f,m)$ satisfies the so-called \emph{LeVeque condition}, where the tuple $(m_1,\ldots,m_r)$ is of the form
\begin{equation} \label{eq:LeVeque}
m_1 \geq 3, m_2 \geq 2 \quad \text{or} \quad m_1=m_2=m_3=2.
\end{equation}

Moreover, in the case $K=\Q$, $\mathfrak{o}_S=\Z$, $b=1$, Schinzel and Tijdeman \cite{SchTij} treated \eqref{eq:Superell} with the exponent $m$ unknown, showing that there are no solutions if $m$ exceeds an effectively computable bound depending only on $f$. All of these results are based on Baker's theory of linear forms in logarithms of algebraic numbers, and have since been generalised over number fields and (in the case of $f$ having no multiple roots) over finitely generated domains over $\Z$ \cite{BEG}. We will state below a version from \cite{BBGMOS} which is useful for our purposes.

For $\alpha \in K$, define the \emph{$S$-norm} of $\alpha$ by
$$
N_S(\alpha) = \prod_{v \in S} |\alpha|_v,
$$
where $| \cdot |_v$ is the normalised absolute valuation of $K$ at the place $v$. When $\alpha \neq 0$, we denote by $h(\alpha)$ the absolute logarithmic Weil height of $\alpha$, see \cite{BG}.

The following is a consequence of \cite[Theorems~2.1~and~2.2]{BBGMOS}.

\begin{theorem} \label{thm:Superell}
Suppose \eqref{eq:Superell} has a solution $(x,y)$ with $y \notin \mathfrak{o}_S^* \cup \{0\}$. Then
$$
m \ll \log^*(N_S(b)) \log^* \log^* N_S(b),
$$
Moreover, if $(f,m)$ satisfies the LeVeque condition \eqref{eq:LeVeque}, then all solutions $(x,y)$ of \eqref{eq:Superell} satisfy
$$
h(x) \ll \begin{cases} N_S(b)^{16r^3} & \text{if } m=2, \\ N_S(b)^{4m^8r^2} & \text{otherwise,} \end{cases}
$$
where $\log^* x = \max(1,\log x)$ and the implied constants depend only on $K$, $S$ and $f$.
\end{theorem}

In order to later apply Theorem~\ref{thm:Superell} to polynomial iterates in the proof of Theorem~\ref{thm:MDGTZ}, we show that given $m \geq 2$, unless a polynomial $f$ is of a special shape, some iterate of $f$ satisfies the LeVeque condition with respect to $m$.

\begin{lemma} \label{lem:IterLeVeque}
Let $f \in \mathfrak{o}_S[X]$ be a polynomial which is not linear nor a monomial, and let $m \geq 2$ be an integer. If $f$ is not of the form $f(X) = X^s p(X)^m$ for some integer $s \geq 0$, and $p \in \overline K[X]$, then either $m=2$ and $f^2$ is of this form, or $(f^j,m)$ satisfies the LeVeque condition for some $j \leq 6$.
\end{lemma}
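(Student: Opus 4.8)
The plan is to analyze the multiset of exponents appearing in the factorization of $f^j$ over $\overline K$, and track how it evolves under further composition with $f$. Write $f(X) = a_0 \prod_{i=1}^r (X - \alpha_i)^{e_i}$ with distinct roots $\alpha_i$. The LeVeque condition \eqref{eq:LeVeque} for $(f^j,m)$ depends only on the two largest values among the $m_i = m/\gcd(m,e_i)$ computed for the root exponents of $f^j$. Since $f^j = f \circ f^{j-1}$, a root of $f^j$ is a preimage under $f^{j-1}$ of some root $\alpha_i$ of $f$, and its multiplicity is a product of a multiplicity in $f^{j-1}$ by the ramification exponent $e_i$ at $\alpha_i$ (when that root is actually hit). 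So the key is to understand, for $m$ fixed, when all root-multiplicities of $f^j$ are divisible by $m$ except possibly for one root of multiplicity exactly $s$ coprime-ish to $m$, i.e. when $(f^j,m)$ fails LeVeque.

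First I would handle the case $m \geq 3$. Here LeVeque fails for $(f^j,m)$ only if at most one of the $m_i$ is $\geq 2$, i.e. all but at most one root multiplicity of $f^j$ is divisible by $m$. Suppose this holds for all $j \leq 6$ (otherwise we are done). The exceptional root, if any, must be a fixed point or lie in a short tail; chasing multiplicities through $f \circ f^{j-1}$, the only way a single "bad" multiplicity can persist under composition is if $f$ itself has the shape $X^s p(X)^m$ (the bad root being $0$, with the rest of $f$ an $m$-th power). One extra composition turns $0$ into a root of $f^j$ only through the factor $X^s$, and iterating shows the bad exponent stabilizes modulo $m$ exactly in this case; conversely if $f$ is not of this form, some root of $f$ has multiplicity not divisible by $m$, and composing enough times (at most a couple more, since we get two distinct preimages feeding in) produces two root multiplicities not divisible by $m$, so $(f^j,m)$ satisfies LeVeque for small $j$. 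The main technical point is bounding how many compositions are needed: one must ensure enough distinct preimages with non-$m$-divisible multiplicity appear, using that $f$ is not linear (so $\deg f \geq 2$ and preimages multiply) and not a monomial (so it is not a single ramified point forever).

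The case $m=2$ is the delicate one and the expected main obstacle. Now LeVeque fails if the sorted tuple avoids both "$m_1 \geq 3, m_2 \geq 2$" and "$m_1=m_2=m_3=2$"; since $m=2$ forces every $m_i \in \{1,2\}$, the condition "$m_1 \geq 3$" is impossible, so LeVeque for $(f^j,2)$ is simply the condition that at least three roots of $f^j$ have odd multiplicity. Thus failure means $f^j$ has at most two roots of odd multiplicity. If $f$ itself is not of the form $X^s p(X)^2$, then $f$ has a root of odd multiplicity other than possibly $0$; I would argue that unless $f^2$ is of the form $X^s p(X)^2$, composing produces at least three odd-multiplicity roots within $j \leq 6$ steps, by examining the preimage tree: each odd-multiplicity root $\alpha$ of $f$ contributes, via its distinct $f$-preimages with odd ramification, several odd-multiplicity roots of $f^2$, and a careful case analysis (on whether $0$ is a root, whether $\deg f = 2$, and whether the critical points are arranged to keep collapsing odd multiplicities to even ones) shows the only obstruction is exactly $f^2 = X^s p(X)^2$. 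Pinning down the constant $6$ here requires the most bookkeeping: one must check the worst cases, e.g. quadratic $f$ with a single finite critical point, where it can take several iterations for three odd-order roots to accumulate, and confirm that if it has not happened by the sixth iterate then it never will, forcing the $f^2$-is-a-near-square conclusion. Throughout, I would freely use that root multiplicities of $f^{j}$ are products of ramification indices along preimage chains, and that $\deg f^j = (\deg f)^j$ grows, so the number of roots (counted without multiplicity) cannot stay too small unless the map is highly ramified in the structured way described.
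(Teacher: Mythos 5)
Your guiding instinct---track how root multiplicities of $f^j$ evolve under iteration via the preimage tree and ramification indices---is in the right direction and matches the overall shape of the paper's argument, but the proposal has two genuine gaps that prevent it from being completed as written.

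First, the case analysis for $m \geq 3$ is wrong when $m$ is even. You assert that LeVeque fails for $(f^j,m)$ only when at most one $m_i \geq 2$. But LeVeque also fails when $m_1 = m_2 = 2$ and all remaining $m_i = 1$, which can happen whenever $m$ is even: take two roots with multiplicity whose gcd with $m$ equals $m/2$. For instance, with $m=4$ and $f(X) = (X-1)^2(X-2)^2 p(X)^4$, LeVeque fails with exactly two $m_i$ equal to $2$. The paper treats this as a separate shape (its case (ii)), and the reduction of case (i) to case (ii) after composing with $f$ is where most of the structure of the proof lives; your sketch never sees this configuration.

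Second, and more fundamentally, the proposal supplies no mechanism that actually forces new multiplicities not divisible by $m$ (for $m=2$, odd multiplicities) to appear as one iterates. Your own phrase ``unless the critical points are arranged to keep collapsing odd multiplicities to even ones'' names exactly the obstruction, but degree growth alone does not rule it out: if $f(X) = (X-\beta)^s p(X)^m$, nothing a priori prevents $f(X)-\beta$ from having a single highly ramified root, so the preimage tree can in principle keep feeding only $m$-divisible multiplicities into $f^j$. The paper's engine is the polynomial $ABC$-theorem (Lemma~\ref{lem:ABC}), which yields $\deg \mathrm{rad}\bigl(f(X)-\beta\bigr) \geq s + (m-1)\deg p$ and thereby forces $f(X)-\beta$ to have many simple roots, outside a short residual list. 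Those residual cases ($m=2$, $s=1$, and the analogues in case (ii)) are finished with a coprimality-plus-derivative argument: the ``square parts'' $p$, $q_1$, $q_2$ of $f$, $f(X)-\beta$, $f(X)-\gamma$ are pairwise coprime and each divides $f'$, yet $\deg(pq_1q_2) > \deg f'$, a contradiction. Neither the $ABC$-theorem nor any derivative/coprimality bound appears in your sketch, so the ``careful case analysis'' and ``bookkeeping'' you defer are not bookkeeping at all---they are the missing core of the proof.
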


\begin{example}
There do exist polynomials $f$ such that $f^2$ is of the exceptional form. For example, let $f(X) = X^3-6iX^2-9X+4i=(X-4i)(X-i)^2$. Then
$$
f^2(X) = X(X^4-9iX^3-27X^2+30iX+9)^2.
$$
\end{example}

\begin{proof}[Proof of Lemma~\ref{lem:IterLeVeque}]
Suppose $f$ is not of the given form, and that $(f,m)$ does not satisfy the LeVeque condition. Then either 
\begin{enumerate}
\item[(i)] $f(X) = (X-\beta)^s p(X)^m$ for some $s \in \Z^+$ not divisible by $m$, $\beta \neq 0 \in \overline K$, and $p \in \overline K[X]$ with $p(\beta) \neq 0$; or
\item[(ii)] $f(X)=(X-\alpha)^r(X-\beta)^s p(X)^m$ for some $\alpha \neq \beta \in \overline K$, $p \in \overline K[X]$ with $p(\alpha),p(\beta) \neq 0$, and $r,s \in \Z^+$ with $\gcd(r,m)=\gcd(s,m)=m/2$. 
\end{enumerate} 
In the first case, we have
\begin{equation} \label{eq:deg2}
\deg f = s + m \deg p,
\end{equation}
and
\begin{equation} \label{eq:degrad2}
\deg \mathrm{rad} \, f = 1+\deg \mathrm{rad} \, p \leq 1 + \deg p.
\end{equation}
By the Lemma~\ref{lem:ABC} with $A=-f(X)$, $B=f(X)-\beta$ and $C=\beta$, we have
$$
\deg \mathrm{rad}(f(X)(f(X)-\beta)) \geq \deg f + 1,
$$
and so by \eqref{eq:deg2} and \eqref{eq:degrad2},
\begin{align} \label{eq:appABC2}
\deg \mathrm{rad}(f(X)-\beta) & \geq s+m \deg p + 1 - (1+ \deg p) \notag \\
& = s + (m-1) \deg p.
\end{align}
Now,
$$
f^2(X) = (f(X)-\beta)^s p(f(X))^m,
$$
so if $f(X)-\beta$ has at least two simple roots, we have reduced to the case (ii) (with $f^2$ in place of $f$). If $f(X)-\beta$ has $u \leq 1$ simple roots, then by \eqref{eq:deg2},
$$
\deg \mathrm{rad}(f(X)-\beta) \leq \frac{\deg f + u}{2} \leq \frac{u+s+m \deg p}{2}.
$$
This gives a contradiction with \eqref{eq:appABC2} if either $u=0$, $s > 1$ or $m > 2$ (in the latter case we also need $\deg p > 0$, but if $\deg p = 0$ then necessarily $s > 1$). If $m=2$ and $s=1$, we have $f(X) = (X-\beta) p(X)^2$. Suppose $f(X)-\beta$ has exactly one simple root, say $f(X)-\beta = (X-\gamma) q_1(X)^2$, and similarly that $f(X)-\gamma = (X-\delta) q_2(X)^2$. Note that $p$ and $q_1$ are coprime unless $\beta = \gamma$, in which case
$$
(X-\beta)q_1(X)^2 = f(X)-\beta = (X-\beta)p(X)^2 - \beta,
$$
and so
$$
(X-\beta)(p(X)^2-q_1(X)^2)=\beta,
$$
which is only possible if $p=q_1$ and $\beta=0$, a contradiction. Similarly $p$ and $q_2$ are coprime, and moreover $q_1$ and $q_2$ are coprime unless $\gamma = 0$, in which case $f^2(X) = Xq(X)^2$ for some polynomial $q$. Therefore we see that $p q_1 q_2 \mid f'$, but $\deg (p q_1 q_2) = 3(\deg f - 1)/2 > \deg f'$, a contradiction. Therefore either $f^2$ or $f^3$ must satisfy the LeVeque condition with 2 or be of the form (ii).

In the case (ii), we have
\begin{equation} \label{eq:deg1}
\deg f = r + s + m \deg p,
\end{equation}
and
\begin{equation} \label{eq:degrad1}
\deg \mathrm{rad} \, f = 2+\deg \mathrm{rad} \, p \leq 2 + \deg p.
\end{equation}
By Lemma~\ref{lem:ABC} with $A = -f(X)$, $B=f(X)-\alpha$ and $C = \alpha$, we have
$$
\deg \mathrm{rad}(f(X)(f(X)-\alpha)) \geq \deg f + 1,
$$
and so by \eqref{eq:deg1} and \eqref{eq:degrad1}
\begin{align} \label{eq:appABC}
\deg \mathrm{rad}(f(X)-\alpha) & \geq r+s+m \deg p + 1 - (2+\deg p) \notag \\
& = r+s+(m-1)\deg p -1.
\end{align}
Now,
\begin{equation} \label{eq:f2}
f^2(X) = (f(X)-\alpha)^r (f(X)-\beta)^s p(f(X))^m,
\end{equation}
so if $f(X)-\alpha$ and $f(X)-\beta$ have between them at least 3 distinct simple roots, then clearly $(f^2,m)$ satisfies the LeVeque condition. If $f(X)-\alpha$ has $u \leq 2$ simple roots, then
$$
\deg \mathrm{rad}(f(X)-\alpha) \leq \frac{\deg f+u}{2},
$$
and so by \eqref{eq:deg1},
$$
\deg \mathrm{rad}(f(X)-\alpha) \leq \frac{u+r+s+m \deg p}{2}.
$$
Since $r,s \geq m/2$, this gives a contradiction with \eqref{eq:appABC} if $m > 2$ or $u = 0$. That is, if $m > 2$ we are done. Otherwise, when $m=2$, $f(X)-\alpha$ and similarly $f(X)-\beta$ each have at least one simple root (distinct from each other and from the roots of $f$). If $\alpha = 0$, then
$$
f^2(X) = X^{r^2}(X-\beta)^{rs} (f(X)-\beta)^s p(X)^4 p(f(X))^2,
$$
and so $(f^2,2)$ satisfies the LeVeque condition, noting that $r$ and $s$ are both odd. Similarly this holds if $\beta=0$, so assume $\alpha,\beta \neq 0$. If $f(X)-\alpha = (X-\gamma)q_1(X)^2$ and $f(X)-\beta=(X-\delta)q_2(X)^2$, then $p$, $q_1$ and $q_2$ are pairwise coprime, so $p q_1 q_2 \mid f'$. This is again a contradiction, since $\deg(p q_1 q_2) \geq 1 + 2(\deg f - 1)/2 > \deg f'$. Thus $f(X)-\alpha$ or $f(X)-\beta$ must have another root of odd multiplicity, and so referring to \eqref{eq:f2}, we conclude once more that $(f^2,2)$ must satisfy the LeVeque condition.
\end{proof}

\section{Diophantine equations with separated variables} \label{sec:BT}

In describing all $f,g \in \Z[X]$ for which $f(x)=g(y)$ has infinitely many integer solutions, Bilu and Tichy \cite{BT} list the following exceptional classes, which they call \emph{standard pairs}. Here $a,b$ are non-zero elements of some field, $m$ and $n$ are positive integers, and $p$ is a non-zero polynomial (which may be constant):
\begin{itemize}
\item $(X^m,aX^rp(X)^m)$ or switched i.e. $(aX^rp(X)^m, X^m)$, where we have $0 \leq r \leq m$, $\gcd(r,m)=1$ and $r+\deg p > 0$.
\item $(X^2, (aX^2+b)p(X)^2)$ (or switched).
\item $(D_m(X,a^n),D_n(X,a^m))$, where $\gcd(m,n)=1$. Here $D_m(X,a)$ denotes the $m$th Dickson polynomial, defined by
$$
D_m(z+a/z,a)=z^m+(a/z)^m.
$$
\item $(a^{-m/2} D_m(X,a),-b^{n/2}D_n(X,b))$, where $\gcd(m,n)=2$.
\item $((aX^2-1)^3,3X^4-4X^3)$ (or switched).
\end{itemize}

Moreover, to deal with the general case of $S$-integers over a number field $K$, they introduce a \emph{specific pair} over $K$, namely
$$
(D_m(X,a^{n/d}),-D_n(X \cos(\pi/d),a^{m/d}))
$$
(or switched), where $d=\gcd(m,n) \geq 3$ and $a,\cos(2\pi/d) \in K$. For $F \in K[X,Y]$ and a subring $R$ of $K$, say that $F(x,y)=0$ has \emph{infinitely many solutions with bounded $R$-denominator} if there exists a non-zero $D \in R$ such that $F(x,y)=0$ has infinitely many solutions $(x,y) \in K \times K$ with $D x, D y \in R$. Then we have the following \cite[Theorem~10.5]{BT}.

\begin{theorem}[Bilu,Tichy] \label{thm:BiluTichy}
Let $K$ be a number field, $S$ a finite set of places of $K$ containing all archimedean places, and $f,g \in K[x]$. Then the following are equivalent:
\begin{enumerate}
\item[(a)] The equation $f(x)=g(y)$ has infinitely many solutions with a bounded $\mathfrak{o}_S$-denominator.
\item[(b)] We have $f=\varphi \circ f_1 \circ \lambda$ and $g = \varphi \circ g_1 \circ \mu$, where $\lambda, \mu \in K[X]$ are linear polynomials, $\varphi \in K[X]$, and $(f_1,g_1)$ is a standard or specific pair over $K$ such that the equation $f_1(x)=g_1(y)$ has infinitely many solutions with bounded $\mathfrak{o}_S$-denominator.
\end{enumerate}
If $K$ is totally real and $S$ is the set of archimedean places (in particular, if $K=\Q$ and $\mathfrak{o}_S=\Z$), then the specific pair may be omitted in (b).
\end{theorem}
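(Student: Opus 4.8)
Theorem~\ref{thm:BiluTichy} is verbatim \cite[Theorem~10.5]{BT}, so in the present paper it is quoted rather than reproved; the sketch below only records the strategy that Bilu and Tichy follow, which is the route I would take were I to reconstruct it. The implication (b)$\Rightarrow$(a) is elementary: if $f=\varphi\circ f_1\circ\lambda$ and $g=\varphi\circ g_1\circ\mu$ with $\lambda,\mu$ linear over $K$, and $f_1(u)=g_1(v)$ has infinitely many solutions with $Du,Dv\in\mathfrak{o}_S$ for a fixed nonzero $D\in\mathfrak{o}_S$, then $x=\lambda^{-1}(u)$ and $y=\mu^{-1}(v)$ satisfy $f(x)=\varphi(f_1(u))=\varphi(g_1(v))=g(y)$, and since $\lambda^{-1},\mu^{-1}$ are $K$-linear, clearing the denominators of their coefficients enlarges $D$ only by a fixed factor, so the new solutions still have bounded $\mathfrak{o}_S$-denominator. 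All the content is therefore in (a)$\Rightarrow$(b).

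For (a)$\Rightarrow$(b), the first ingredient is Siegel's theorem on $S$-integral points on affine curves: if $f(x)=g(y)$ has infinitely many solutions with bounded $\mathfrak{o}_S$-denominator, then (enlarging $S$ to absorb the bounding denominator) the plane curve $f(x)=g(y)$ has a geometrically irreducible component $C$ whose smooth projective model has genus $0$ and at most two points over $x=\infty$. One then normalises the pair by stripping off the largest common left composition factor, writing $f=\varphi\circ\tilde f$ and $g=\varphi\circ\tilde g$ with $\varphi\in K[X]$ of maximal degree; this reduces matters to the case that $f$ and $g$ have no common left composition factor of degree $>1$ and that $C$ lies over $\tilde f(x)=\tilde g(y)$. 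In this situation the results of Fried (and Feit) on the factorisation of $f(x)-g(y)$ show that $C$ is either the whole curve $f(x)-g(y)=0$ or, in a short list of exceptional reducible cases, a controlled factor of it; in every case the conditions of genus $0$ and of at most two points at infinity now translate directly into constraints on $f$ and $g$.

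The remaining step, the classification subject to those constraints, is the one I expect to be by far the main obstacle, and it occupies most of \cite{BT}. Applying the Riemann--Hurwitz formula to the covering $(x,y)\mapsto t:=f(x)=g(y)$ of the $t$-line by (the model of) the curve, one expresses the genus and the number of points above $t=\infty$ in terms of the ramification of $f$ and of $g$ over their finite critical values and over $\infty$; in particular ``at most two points at infinity'' forces $\gcd(\deg f,\deg g)\le 2$. Imposing genus $0$ then pins the ramification profiles of $f$ and $g$ down very tightly, and combining this with Ritt's polynomial decomposition theory and a delicate combinatorial and group-theoretic analysis of the admissible profiles shows that, up to the linear changes $\lambda,\mu$ and the common factor $\varphi$, the only surviving pairs $(f_1,g_1)$ are the listed standard pairs --- the monomial pair, the two quadratic pairs, the two Dickson pairs, and the sporadic pair $((aX^2-1)^3,3X^4-4X^3)$ --- together with, over a general number field, the specific Dickson pair twisted by $\cos(\pi/d)$. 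The totally real refinement then follows by checking, via Siegel's criterion for which rational affine curves carry infinitely many $S$-integral points (at most two points at infinity, and in the two-point case an $\mathfrak{o}_S$-unit of infinite order supported there), that when $S$ consists only of the archimedean places of a totally real field $K$ the curve attached to the specific pair cannot carry infinitely many such solutions, so that pair does not occur in (b).
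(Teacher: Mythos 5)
You correctly recognise that the paper does not prove this statement at all: it is quoted verbatim from \cite[Theorem~10.5]{BT}, which is exactly how the paper treats it. Your sketch of Bilu and Tichy's strategy (elementary composition for (b)$\Rightarrow$(a); Siegel's theorem forcing genus $0$ and at most two points at infinity; normalisation via the common left composition factor and Ritt's theory; Fried's results on the factorisation of $f(x)-g(y)$; and the Riemann--Hurwitz classification of admissible ramification profiles, with the totally real refinement ruling out the specific pair) is a fair and accurate summary of the cited argument, but since the paper supplies no proof there is nothing here to compare against.
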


Given polynomials $f,g \in \mathfrak{o}_S[X]$ which are neither linear nor monomials, define
$$
\cE(f) := \{ 1 \} \cup \{ \ell \in \Z_{\geq 2} : (f,\ell) \text{ fails the LeVeque condition} \},
$$
and note that the set
$$
\cE(f,g) := \{ (k,\ell) \in \Z^+ \times \Z^+ : k \in \cE(g), \ell \in \cE(f) \text{ and } \gcd(k,\ell)=1 \}
$$
is finite. We have the following result, of which Theorem~\ref{thm:MDBT1} is a special case.

\begin{theorem}
Let $K$ be a number field, $S$ a finite set of places containing all the archimedean ones, and suppose $f,g \in \mathfrak{o}_S[X]$ have at least two distinct roots. Then there are only finitely many $(x,y) \in \mathfrak{o}_S^2$ such that either $f(x)=g(y)$ or $(f(x),g(y))$ is multiplicatively dependent of rank 1 unless for some $(k,\ell) \in \cE(f,g)$, we have $f(X)^k = \varphi(f_1(\lambda(X)))$ and $\zeta g^\ell = \varphi(g_1(\mu(X)))$, where $\zeta$ is a root of unity contained in $K$, $\lambda, \mu \in K[X]$ are linear, $\varphi \in K[X]$, and $(f_1,g_1)$ is a standard or specific pair over $K$, such that $f_1(x)=g_1(y)$ has infinitely many solutions with a bounded $\mathfrak{o}_S$ denominator. If $K$ is totally real and $S$ is the set of archimedean places (in particular, if $K=\Q$ and $\mathfrak{o}_S=\Z$), then the specific pair may be omitted.
\end{theorem}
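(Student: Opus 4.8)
The plan is to prove the stated dichotomy by assuming there are infinitely many $(x,y)\in\mathfrak{o}_S^2$ with $f(x)=g(y)$ or $(f(x),g(y))$ multiplicatively dependent of rank $1$, and deducing the decomposition. First I would dispose of the pairs with $f(x)=g(y)$: if $c:=f(x)=g(y)$ is $0$ or a root of unity then $(x,y)$ lies in a finite set, since $g$ is non-constant and $K$ contains only finitely many roots of unity, while if $c$ is not a root of unity then $(c,c)$ is multiplicatively dependent of rank $1$. Hence it suffices to treat pairs for which $(f(x),g(y))$ is multiplicatively dependent of rank $1$. Among these I would next discard the (finitely many) pairs with $f(x)\in\mathfrak{o}_S^*\cup\{0\}$ or $g(y)\in\mathfrak{o}_S^*\cup\{0\}$: a zero coordinate is incompatible with multiplicative dependence, and since $f$ has at least two distinct roots, after replacing $K$ by a finite extension and enlarging $S$ so that the roots of $f$ become integral, its leading coefficient a unit, and its distinct roots pairwise differing by units, the relation $f(x)\in\mathfrak{o}_S^*$ unwinds to an $S$-unit equation and so has finitely many solutions $x$; for each such $x$, rank-$1$ dependence forces $g(y)$ to be an $S$-unit as well, hence to lie in a finite set, and symmetrically for $g$. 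So I may assume that every remaining pair has $f(x),g(y)\notin\mathfrak{o}_S^*\cup\{0\}$, with neither a root of unity.

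For each remaining pair, pick a nonzero $(a,b)\in\Z^2$ with $f(x)^ag(y)^b=1$. Both exponents are nonzero (else a coordinate of $(f(x),g(y))$ would be a root of unity, already excluded) and of opposite sign (else $f(x)$ and $g(y)$ would both be $S$-units, also excluded), so $f(x)^p=g(y)^q$ for some $p,q\ge1$; putting $d=\gcd(p,q)$, $k=p/d$, $\ell=q/d$, the equal $d$-th powers $(f(x)^k)^d=(g(y)^\ell)^d$ give $f(x)^k=\zeta g(y)^\ell$ for a $d$-th root of unity $\zeta\in K$, with $\gcd(k,\ell)=1$. To bring Theorem~\ref{thm:Superell} into play I would, when $\ell\ge2$, choose $u,v$ with $uk+v\ell=1$ and set $\eta=\zeta^u$, $w=g(y)^uf(x)^v$; then $f(x)=\eta w^\ell$, and $w\in\mathfrak{o}_S$ because $w^\ell=\eta^{-1}f(x)\in\mathfrak{o}_S$. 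Unless $w\in\mathfrak{o}_S^*\cup\{0\}$ --- which would put $f(x)\in\mathfrak{o}_S^*\cup\{0\}$, excluded --- the pair $(x,w)$ solves an instance of \eqref{eq:Superell} whose coefficient is the root of unity $\eta$; since $|\eta|_v=1$ at every place we have $N_S(\eta)=1$, so Theorem~\ref{thm:Superell} yields $\ell\ll1$, and if in addition $(f,\ell)$ satisfies the LeVeque condition \eqref{eq:LeVeque} it yields $h(x)\ll1$, confining $x$ to a finite set. Running the symmetric argument with $g$ in place of $f$ (using $g(y)^\ell=\zeta^{-1}f(x)^k$) bounds $k$, and when $k\ge2$ and $(g,k)$ satisfies the LeVeque condition it confines $y$ to a finite set.

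Consequently all remaining pairs have $k$ and $\ell$ bounded in terms of $K,S,f,g$. Those for which $\ell\ge2$ and $(f,\ell)$ satisfies the LeVeque condition, or $k\ge2$ and $(g,k)$ does, are finite in number: for such a pair one of $x,y$ lies in a finite set, and then the relation $f(x)^k=\zeta g(y)^\ell$, together with the bounds on $k,\ell$ and the finiteness of the roots of unity of $K$, pins the other coordinate to a finite set too. Discarding these, every surviving pair has $\ell\in\cE(f)$, $k\in\cE(g)$ and $\gcd(k,\ell)=1$, that is, $(k,\ell)\in\cE(f,g)$, and $\zeta$ among the finitely many roots of unity of $K$. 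Since $\cE(f,g)$ is finite, infinitely many surviving pairs share one $(k,\ell)\in\cE(f,g)$ and one $\zeta$; for these the separated-variable equation $f(X)^k=\zeta g(Y)^\ell$, with $f^k,\zeta g^\ell\in K[X]$, has infinitely many solutions in $\mathfrak{o}_S^2$, in particular with bounded $\mathfrak{o}_S$-denominator. Theorem~\ref{thm:BiluTichy} then gives linear $\lambda,\mu\in K[X]$, a polynomial $\varphi\in K[X]$, and a standard or specific pair $(f_1,g_1)$ over $K$ for which $f_1(x)=g_1(y)$ has infinitely many solutions of bounded $\mathfrak{o}_S$-denominator, with $f(X)^k=\varphi(f_1(\lambda(X)))$ and $\zeta g(X)^\ell=\varphi(g_1(\mu(X)))$; when $K$ is totally real and $S$ is the set of archimedean places the specific pair can be omitted. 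This is the asserted conclusion.

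The crux is the middle step: turning an arbitrary, possibly non-primitive, multiplicative relation of unknown size into exactly the superelliptic shape $f(X)=\eta Y^\ell$ required by Theorem~\ref{thm:Superell} --- tracking the sign alternatives for $(a,b)$, the root of unity $\eta$ that appears on dividing out $\gcd(p,q)$, and the integrality of the auxiliary variable $w$ --- and then combining the resulting bounds with the (automatic) finiteness of $\cE(f,g)$ so that Bilu and Tichy's classification can be applied to a single equation rather than to an uncontrolled family of them. Everything else is bookkeeping with $S$-unit equations and Northcott's theorem.
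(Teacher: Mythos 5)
Your proposal is correct and follows essentially the same route as the paper: reduce a rank-$1$ dependence to a relation $f(x)^k=\zeta g(y)^\ell$ with coprime $k,\ell\geq 1$ and $\zeta$ a root of unity of $K$, use the explicit superelliptic bounds (Theorem~\ref{thm:Superell}) via the B\'{e}zout substitution to bound $k,\ell$ and (when the LeVeque condition holds) the heights of $x,y$, and then pigeonhole on the finite set $\cE(f,g)$ and the roots of unity of $K$ to apply the Bilu--Tichy classification (Theorem~\ref{thm:BiluTichy}). The only cosmetic differences are that the paper invokes \cite[Proposition~1.5]{KLS} for the finiteness of pairs with $f(x),g(y)$ both $S$-units, whereas you unwind this to a two-term $S$-unit equation directly, and that you write the superelliptic equation for $f$ while the paper writes the symmetric one for $g$.
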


\begin{proof}
Suppose $(x,y) \in \mathfrak{o}_S^2$ is such that $f(x)=g(y)$ or $(f(x),g(y))$ is multiplicatively dependent of rank 1. Then we must have
\begin{equation} \label{eq:BTmultdep}
f(x)^{k'} = g(y)^{\ell'}
\end{equation}
for some $k',\ell' \neq 0$. Recall that by assumption, both $f$ and $g$ have at least 2 distinct roots. Hence, for example by \cite[Proposition~1.5]{KLS}, there are only finitely many pairs $(x,y) \in \mathfrak{o}_S^2$ such that $f(x)$ and $g(y)$ are both $S$-units. Excluding these, we may assume, without loss of generality, that there is some $v \notin S$ such that $|f(x)|_v < 1$. Thus, if $k',\ell'$ have different signs in \eqref{eq:BTmultdep}, then
$$
1 \geq |g(y)|_v = |f(x)|_v^{k'/\ell'} > 1,
$$
a contradiction. Therefore, we may assume $k',\ell' > 0$. We have
$$
f(x)^k = \zeta g(y)^\ell
$$
where $k = k'/\gcd(k',\ell')$ and $\ell=\ell'/\gcd(k',\ell')$ are coprime, and $\zeta$ is a root of unity contained in $K$. If $(k,\ell) \notin \cE(f,g)$, assume without loss of generality that $k > 1$ and $(g,k)$ satisfies the LeVeque condition. Take $a,b \in \Z$ such that $ak+b\ell=1$. Then
\begin{equation} \label{eq:PerfPower}
g(y) = g(y)^{ak+b\ell} = (\zeta^{-b} g(y)^a f(x)^b)^k.
\end{equation}
Clearly $z:=\zeta^{-b} g(y)^a f(x)^b \in K$, and in fact $z \in \mathfrak{o}_S$, since for $v \notin S$, $|z|_v = |g(y)|_v^{1/k} \leq 1$. Thus by Theorem~\ref{thm:Superell}, we have $h(y), k \ll 1$, with the implied constants depending only on $K,S$ and $g$. So by Northcott's theorem, there are only finitely many possible choices for $y$. If additionally $\ell > 1$ and $(f,\ell)$ satisfies the LeVeque condition, we similarly obtain that there are only finitely many possibilities for $x$. Otherwise, we have $\ell \in \cE(f)$ and so $\ell \ll 1$ (depending only on $f$). Therefore $f(x)^k=\zeta g(y)^\ell$, with $k,\ell \ll 1$ and $\zeta,y$ belonging to finite sets, and so again there are only finitely many possibilities for $x$.

We conclude that there are infinitely many $(x,y) \in \mathfrak{o}_S^2$ such that $(f(x),g(y))$ is multiplicatively dependent of rank 1 if and only if there are infinitely many solutions $x,y \in \mathfrak{o}_S$ to $f(x)^k=\zeta g(y)^\ell$ for some $(k,\ell) \in \cE(f,g)$ and some root of unity $\zeta \in K$. The result follows by Theorem~\ref{thm:BiluTichy}. 
\end{proof}

\section{Multiplicative dependence in orbits} \label{sec:GTZ}

\subsection{Specialisation} To prove Theorem~\ref{thm:MDGTZ}, we can restrict our attention to the finitely generated (over $\Q$) subfield $K$ of $\C$ generated by the roots and coefficients of $f$ and $g$ together with the points $x$ and $y$. We are thus able to reduce to the case where $K$ is a number field by inductively making appropriate specialisations of the transcendental generators of $K$. This allows us to use the tools from \textsection \ref{subsec:BBGMOS}. For a polynomial $f$ defined over a field $K$, and a homomorphism $\varphi$ whose domain is a subring of $K$ containing the coefficients of $f$, let $f^\varphi$ denote the polynomial obtained by applying $\varphi$ to the coefficients of $f$.

\begin{prop} \label{prop:Specialisation}
Let $K$ be a finitely generated field over $\Q$, suppose $f,g \in K[X]$ are not linear and split into linear factors over $K$, and let $x,y \in K$. Let $E$ be a subfield of $K$ such that $\mathrm{trdeg}(K/E)=1$ and $E/\Q$ is finitely generated. Then there exists a subring $R$ of $K$, a finite extension $E'$ of $E$ and a homomorphism $\varphi : R \to E'$ such that
\begin{enumerate}
\item $R$ contains $x,y$ and every coefficient and root of $f$ and $g$, the leading and lowest order coefficients of $f$ and $g$ have non-zero image under $\varphi$, and distinct roots of $f$ and $g$ have pairwise distinct images under $\varphi$;
\item $\varphi(x)$ is not preperiodic for $f^\varphi$;
\item If $k,\ell \in \Z^+$ and $(f(X),g(X))$ is not of the form $(X^s \tilde f(X)^\ell, X^t \tilde g(X)^k)$ for some $s,t \geq 0$ and polynomials $\tilde f, \tilde g$, then neither is $(f^\varphi(X),g^\varphi(X))$.
\end{enumerate}
\end{prop}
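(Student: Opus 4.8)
The plan is to produce $\varphi$ as the homomorphism associated to a well-chosen closed point of an affine curve over $E$.

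\emph{Set-up.} I would let $G\subset K$ be the finite set consisting of $x,y$ and all coefficients and roots of $f$ and $g$, and let $\Delta\in K^{\times}$ be the product of the leading coefficients of $f$ and $g$, their lowest-order coefficients, and all differences $\alpha-\beta$ of distinct roots $\alpha,\beta$ of $f$ or of $g$. Take $R:=E[G][\Delta^{-1}]\subseteq K$, a finitely generated $E$-algebra which is a domain with $\Delta\in R^{\times}$. If $\operatorname{Frac}(R)$ is algebraic over $E$ then it is a finite extension $E'$ of $E$ with $R\subseteq E'$, and one takes $\varphi$ to be the inclusion (property (2) then holding because $x$ is not preperiodic for $f$, a condition unaffected by passing between $K$, $E'$ and $\overline{E}$). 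Otherwise $\operatorname{trdeg}(\operatorname{Frac}(R)/E)=1$, so $V:=\Spec R$ is an integral affine curve over $E$; every closed point $\mathfrak m$ has residue field $E'_{\mathfrak m}:=R/\mathfrak m$ finite over $E$, there are infinitely many such $\mathfrak m$ (a one-dimensional finitely generated algebra over a field is not Artinian), and each yields $\varphi_{\mathfrak m}:R\to E'_{\mathfrak m}$. The task is to locate one $\mathfrak m$ satisfying (1)--(3).

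\emph{Properties (1) and (3).} Since $\Delta\in R^{\times}$, for \emph{every} closed point $\mathfrak m$ the element $\varphi_{\mathfrak m}(\Delta)$, hence each of its factors, is nonzero, and as $G\subseteq R$ this is (1). For (3), write $f=c\prod_i(X-\alpha_i)^{e_i}$ with the $\alpha_i\in G$ distinct; then $f^{\varphi_{\mathfrak m}}=\varphi_{\mathfrak m}(c)\prod_i(X-\varphi_{\mathfrak m}(\alpha_i))^{e_i}$, and by (1) the $\varphi_{\mathfrak m}(\alpha_i)$ remain pairwise distinct and are nonzero exactly when $\alpha_i\ne 0$. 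Hence the multiset of multiplicities of the nonzero roots of $f^{\varphi_{\mathfrak m}}$ equals that of $f$, and likewise for $g$. Since, for a fixed pair $(k,\ell)$, a polynomial $f$ has the shape $X^s\tilde f(X)^{\ell}$ precisely when $\ell$ divides the multiplicity of every nonzero root of $f$ (over $\C$ the requisite $\ell$-th root of the leading coefficient always exists; if $\tilde f,\tilde g$ are wanted over $E'_{\mathfrak m}$ one enlarges $E'_{\mathfrak m}$), this combinatorial condition, and therefore membership of $(f,g)$ in the exceptional family for $(k,\ell)$, is preserved — so (3) holds for every $\mathfrak m$ once (1) does.

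\emph{Property (2), the crux.} Here I would use that $x$ is not preperiodic for $f$ over $K$ — the situation in which the proposition is applied in the proof of Theorem~\ref{thm:MDGTZ} — equivalently over the function field $L:=\operatorname{Frac}(R)$. If $f$ is not isotrivial over $L/E$, the canonical height $\hat h_f$ on $L$ vanishes exactly on preperiodic points, so $\hat h_f(x)>0$; a Call--Silverman-type specialisation argument then shows $\varphi_{\mathfrak m}(x)$ is non-preperiodic for $f^{\varphi_{\mathfrak m}}$ for all $\mathfrak m$ outside a set of bounded height (in particular outside a non-dense set), and since $V$ has infinitely many closed points a good $\mathfrak m$ exists. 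If $f$ is isotrivial, I would pass to a finite extension $L'/L$ and a linear $\mu\in\overline{L'}[X]$ with $f_0:=\mu^{-1}\circ f\circ\mu\in\overline{E}[X]$; then $Q:=\mu^{-1}(x)$ is not preperiodic for $f_0$, and for all $\mathfrak m$ away from the finitely many poles of $\mu$, of its coefficients, of $x$ and of $\Delta^{-1}$, one has $\varphi_{\mathfrak m}(x)$ non-preperiodic for $f^{\varphi_{\mathfrak m}}$ if and only if the specialised value of $Q$ lies outside $\Preper(f_0)$. Since $\Preper(f_0)\subsetneq\overline{E}$ (it has bounded height) and $Q$ is either already a constant in $\overline{E}\setminus\Preper(f_0)$ or a non-constant — hence surjective — function on the relevant curve, such an $\mathfrak m$ exists. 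Choosing $\mathfrak m$ good for (2) and setting $\varphi:=\varphi_{\mathfrak m}$, $E':=E'_{\mathfrak m}$, properties (1) and (3) come for free.

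\emph{Expected main obstacle.} The whole content is in property (2), and within it the isotrivial case is the delicate point: the set of bad specialisations is then genuinely infinite (already for $f=X^2$, every specialisation carrying $x$ to a root of unity is bad), so a naive ``avoid finitely many points'' count fails and one is forced to conjugate to a constant-coefficient map and exploit surjectivity of a non-constant function together with $\Preper(f_0)\subsetneq\overline{E}$. Verifying that $\mu$ and $\mu^{-1}$ can be arranged to specialise well, and bookkeeping the finitely many excluded places, is routine but must be done carefully.
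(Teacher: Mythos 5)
Your proof is correct and, in substance, the same as the paper's: specialise at a well-chosen closed point of a curve over $E$, verify (1) and (3) via preservation of root multiplicities, and settle (2) by a height argument. The only genuine difference lies in how (2) is discharged. The paper treats this step as a black box, quoting \cite[Proposition~6.2]{GTZ1} (reproduced as Proposition~\ref{prop:spec2}) together with the Lang-type lower bound $h_C(\varphi)\geq A\,h(P)+B$ to produce infinitely many $\varphi$ of height $>c$; you re-derive that black box from scratch, splitting into the non-isotrivial case (positivity of $\hat h_f$ and Call--Silverman specialisation) and the isotrivial case (conjugation to a constant-coefficient map $f_0$ and surjectivity of a non-constant function of the curve onto all but finitely many values of $\overline E\setminus\Preper(f_0)$). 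This makes your argument more self-contained, at the cost of the extra bookkeeping you flag at the end, which indeed needs care but is routine. The other differences are cosmetic: the paper works with the valuation ring at a place of a smooth projective model of $K/E$ and accepts that condition (1) can fail at finitely many $\varphi$, whereas you invert a discriminant-type element $\Delta$ in a finitely generated affine coordinate ring so that (1) holds at every closed point of $\Spec R$. You also correctly observe that (2) tacitly requires $x$ to not be preperiodic for $f$ over $K$ (otherwise no specialisation can satisfy it); the paper leaves that hypothesis unstated in the proposition and imposes it only at the point of application in Proposition~\ref{prop:MDGTZprelim}.
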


To prove this, we recall the usual setup for specialisation, following the exposition in \cite[\textsection 6]{GTZ1}. We may assume (replacing it with a finite extension if necessary) that $E$ is algebraically closed in $K$. Let $C$ be a smooth projective curve over $E$ whose function field is $K$, and let $\pi : \P_C^1 \to C$ be the natural fibration. Any $z \in \P_K^1$ gives rise to a section $Z : C \to \P^1$ of $\pi$, and for $\varphi \in C(\overline E)$, we let $z_\varphi := Z(\varphi)$, and let $E(\alpha)$ be the residue field of $K$ at the valuation corresponding to $\varphi$. Let $R$ be the valuation ring for this valuation, $E'=E(\alpha)$, and define the homomorphism $R \to E'$ by $z \mapsto z_\varphi$. The polynomial $f \in K[X]$ extends to a rational map (of $E$-varieties) from $\P_C^1$ to itself, whose generic fibre is $f$, and whose fibre above any $\varphi \in C$ is $f^\varphi$. Note that $f^\varphi$ is a morphism of degree $\deg(f)$ from the fibre $(\P_C^1)_\varphi = \P_{E(\varphi)}^1$ to itself whenever the coefficients of $f$ have no poles or zeros at $\varphi$; hence it is a morphism on $\P_{E(\varphi)}^1$ of degree $\deg(f)$ at all but finitely many $\varphi$.

Let $h_C$ be the logarithmic Weil height on $C$ associated to a fixed degree-one ample divisor. We have the following.

\begin{prop} \cite[Proposition~6.2]{GTZ1} \label{prop:spec2}
Let $z \in \P^1_K$. There exists $c > 0$ such that, for $\varphi \in C(\overline E)$ with $h_C(\varphi) > c$, the point $z_\varphi$ is not preperiodic for $f^\varphi$.
\end{prop}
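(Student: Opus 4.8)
The plan is to derive this from the theory of canonical heights over function fields together with the Call--Silverman specialisation theorem; the full argument is carried out in \cite{GTZ1}. We may assume $z$ is not preperiodic for $f$ over $K$, since otherwise $z_\varphi$ is preperiodic for $f^\varphi$ for every $\varphi$ and there is nothing to prove. Put $d=\deg f\geq 2$. The function field $K=E(C)$, together with the height $h_C$, yields a Weil height on $\P^1(\overline K)$ --- pull $h_C$ back along the projection $\P^1_C\to C$ of the section attached to a point of $\P^1(\overline K)$ --- and hence the Call--Silverman canonical height $\hat h_{f,K}$ on $\P^1(\overline K)$, characterised by $\hat h_{f,K}=h_C+O(1)$ and $\hat h_{f,K}\circ f=d\,\hat h_{f,K}$. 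Since a preperiodic point has finite forward orbit, any $f^\varphi$-preperiodic point $P$ satisfies $\hat h_{f^\varphi}(P)=0$; so it suffices to produce $c>0$ with $\hat h_{f^\varphi}(z_\varphi)>0$ whenever $h_C(\varphi)>c$.

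The first step is to show $\hat h_{f,K}(z)>0$. If $f$ is not isotrivial --- that is, not $\overline K$-conjugate to a map with coefficients in the constant field $\overline E$ --- then by Baker's finiteness theorem for canonical heights over function fields the set $\{P:\hat h_{f,K}(P)=0\}$ consists exactly of the $f$-preperiodic points, so $\hat h_{f,K}(z)>0$. If $f$ is isotrivial, pick an automorphism $\mu\in\mathrm{PGL}_2(\overline K)$ (defined over a finite extension of $K$) with $\psi:=\mu^{-1}\circ f\circ\mu$ defined over $\overline E$; conjugating by the fixed $\mu$ changes $h_C$ by $O(1)$, so $\hat h_{f,K}(z)=0$ exactly when $\hat h_{\psi}(\mu^{-1}(z))=0$, which for the constant-coefficient map $\psi$ of degree $d\geq 2$ happens exactly when $\mu^{-1}(z)\in\P^1(\overline E)$ (since $\deg(\psi^n\circ Q)=d^n\deg Q$ for a map $Q:C\to\P^1$). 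But then $z_\varphi=\mu^\varphi(\mu^{-1}(z))$ and $f^\varphi=\mu^\varphi\circ\psi\circ(\mu^\varphi)^{-1}$ for all but finitely many $\varphi$, so $z_\varphi$ is preperiodic for $f^\varphi$ iff $\mu^{-1}(z)$ is preperiodic for $\psi$ iff $z$ is preperiodic for $f$ over $\overline K$ --- contradicting our assumption. Hence in this last sub-case the desired conclusion already holds for every $\varphi$ with $h_C(\varphi)$ large enough, and in all remaining cases $\hat h_{f,K}(z)>0$.

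Assuming now $\hat h_{f,K}(z)>0$, I would apply the Call--Silverman specialisation theorem: for $z\in\P^1(K)$ one has
$$
\hat h_{f^\varphi}(z_\varphi)=\hat h_{f,K}(z)\cdot h_C(\varphi)+o\big(h_C(\varphi)\big)\qquad\text{as } h_C(\varphi)\to\infty .
$$
As $\hat h_{f,K}(z)>0$, there is $c>0$ such that $h_C(\varphi)>c$ forces $\hat h_{f^\varphi}(z_\varphi)\geq\tfrac12\,\hat h_{f,K}(z)\,h_C(\varphi)>0$, whence $z_\varphi$ is not preperiodic for $f^\varphi$.

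The main difficulties are: (a) deploying this machinery over the base field $E$, which is finitely generated over $\Q$ but may have positive transcendence degree, so that $h_C$, $\hat h_{f,K}$ and $\hat h_{f^\varphi}$ must be read in a framework of heights over finitely generated fields (e.g.\ Moriwaki's), with Baker's theorem and the Call--Silverman estimate invoked in that generality; and (b) the bookkeeping in the isotrivial case needed to see that non-preperiodicity of $z$ passes to $z_\varphi$ under every sufficiently good specialisation.
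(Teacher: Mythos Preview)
The paper does not supply its own proof of this proposition; it is quoted as a black box from \cite[Proposition~6.2]{GTZ1} and then applied in the proof of Proposition~\ref{prop:Specialisation}. Your sketch is essentially the argument carried out in \cite{GTZ1}: one first shows $\hat h_{f,K}(z)>0$ (via Baker's theorem on canonical heights over function fields in the non-isotrivial case, and by a direct constant-field reduction in the isotrivial case), and then applies the Call--Silverman specialisation theorem to conclude $\hat h_{f^\varphi}(z_\varphi)>0$ once $h_C(\varphi)$ is large. So your approach matches the cited source, including the two difficulties you flag (heights over a finitely generated base $E$, and the isotrivial bookkeeping).

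One small correction: your opening reduction ``if $z$ is preperiodic for $f$ there is nothing to prove'' is not accurate. In that case $z_\varphi$ is preperiodic for $f^\varphi$ for \emph{every} good $\varphi$, so the stated conclusion fails rather than holding vacuously. The proposition should be read with the implicit hypothesis that $z$ is not $f$-preperiodic over $K$; this is indeed how it is used downstream (in the proof of Proposition~\ref{prop:MDGTZprelim} one first assumes $x$ is not preperiodic for $f$ before invoking the specialisation).
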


\begin{proof}[Proof of Proposition \ref{prop:Specialisation}]
Let $\psi : C \to \P_E^1$ be any non-constant rational function. By \cite[Prop. 4.1.7]{Lang}, there are positive constants $A$ and $B$ such that for all $P \in \P^1(\overline E)$, the preimage $\varphi = \psi^{-1}(P)$ satisfies $h_C(\varphi) \geq Ah(P)+B$. Since there are infinitely many $P \in \P^1(E)$ such that $h(P) > (c-B)/A$, we obtain infinitely many $\varphi \in C(\overline E)$ such that $h_C(\varphi) > c$. Proposition~\ref{prop:spec2} thus implies that there are infinitely many $\varphi$ satisfying (2), and all but finitely many of these satisfy (1) as well. Finally, it is clear that a specialisation $\varphi$ satisfying (1) also satisfies (3), since by construction all the multiplicities of the roots of $f$ and $g$ are preserved.
\end{proof}

\subsection{Proof of Theorem~\ref{thm:MDGTZ}}

We are now ready to prove the following result, which together with Theorem~\ref{thm:GTZ} will imply Theorem~\ref{thm:MDGTZ}.

\begin{prop} \label{prop:MDGTZprelim}
Let $f,g \in \C[X]$ be polynomials which are not linear or monomials, and let $x,y \in \C$. Then
\begin{enumerate}
\item[(a)] There are only finitely many pairs $(z,w) \in \cO_f(x) \times \cO_g(y)$ such that $z^k=w^\ell$ with $k,\ell$ non-zero integers of different signs.
\item[(b)] If $(z,w) \in \cO_f(x) \times \cO_g(y)$ is such that $z^k=\zeta w^\ell$ with $k,\ell \in \Z^+$ coprime and $\zeta$ a root of unity, then $k, \ell \ll 1$, with the implied constants depending only on $f,g,x,y$.
\item[(c)] Let $k,\ell \in \Z^+$ be relatively prime. Suppose that $(f(X),g(X))$ is not of the form $(X^s \tilde f(X)^\ell, X^t \tilde g(X)^k)$ for some $s,t \geq 0$ and $\tilde f, \tilde g \in \C[X]$, and additionally that $(f^2(X),g(X))$ is not of this form if $k=2$, and that $(f(X),g^2(X))$ is not of this form if $\ell=2$. Then there are only finitely many pairs $(z,w) \in \cO_f(x) \times \cO_g(y)$ such that $z^k = w^\ell$.
\end{enumerate}
\end{prop}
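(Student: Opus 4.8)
The plan is to reduce everything to a number field via Proposition~\ref{prop:Specialisation}, and then to run arguments closely parallel to those in \textsection\ref{sec:BT}, but now with $f(x),g(y)$ replaced by the iterates $f^m(x),g^n(y)$. So first I would let $K$ be the finitely generated subfield of $\C$ generated by the coefficients and roots of $f,g$ together with $x,y$; after passing to a finite extension we may assume $f,g$ split over $K$. If $x$ (resp.\ $y$) is preperiodic for $f$ (resp.\ $g$) then $\cO_f(x)$ (resp.\ $\cO_g(y)$) is finite and all three statements are trivial, so assume both orbits are infinite. If $\operatorname{trdeg}(K/\Q)\geq 1$, pick a subfield $E$ with $\operatorname{trdeg}(K/E)=1$, apply Proposition~\ref{prop:Specialisation} to get $\varphi:R\to E'$, and note that a multiplicative dependence relation $f^m(x)^k=w^\ell$ among elements of the orbits survives under $\varphi$ in the sense that $(f^\varphi)^m(\varphi(x))^k = (\varphi(w))^\ell$ (using that $\varphi$ is a ring homomorphism and that $\varphi(x)$ is not preperiodic for $f^\varphi$, so the specialised orbits are still infinite); since $\varphi$ preserves root multiplicities, the exceptional shapes in (c) are also preserved. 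Iterating finitely many times reduces us to the case where $K$ is a number field. Fix a finite set $S$ of places containing the archimedean ones and large enough that $f,g\in\mathfrak{o}_S[X]$ and $x,y\in\mathfrak{o}_S$; enlarging $S$ if needed, every $f^m(x),g^n(y)$ lies in $\mathfrak{o}_S$.

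For part (a): a relation $f^m(x)^k = g^n(y)^\ell$ with $k,\ell$ of opposite sign forces $f^m(x)$ and $g^n(y)$ to have the same set of places $v\notin S$ at which they are non-units (if $|f^m(x)|_v<1$ then $|g^n(y)|_v>1$, impossible since both are $S$-integers). Since $f,g$ are not monomials, they each have at least two distinct roots, so at most finitely many values $f^m(x)$ (resp.\ $g^n(y)$) can be $S$-units, by the result of \cite{KLS} quoted in \textsection\ref{sec:BT} — more precisely, apply it to a fixed iterate that is not a monomial and has two distinct roots, or directly observe that $f^m(x)$ being an $S$-unit for infinitely many $m$ contradicts growth of heights. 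Excluding these, there is always a bad place $v\notin S$ for $f^m(x)$, and then the sign argument from \eqref{eq:BTmultdep} in the proof in \textsection\ref{sec:BT} gives a contradiction, so (a) holds with finitely many exceptions.

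For part (b): given $f^m(x)^k=\zeta\, g^n(y)^\ell$ with $\gcd(k,\ell)=1$, the point is that we have an honest perfect power. Choose $a,b\in\Z$ with $ak+b\ell=1$; then $g^n(y)=(\zeta^{-b}g^n(y)^a f^m(x)^b)^k=:z^k$, and $z\in\mathfrak{o}_S$ as in \eqref{eq:PerfPower}. Now I would apply Lemma~\ref{lem:IterLeVeque} to $g$: either $g$ (or, if $k=2$, $g^2$) is of the exceptional form $X^t p(X)^k$ — and in that degenerate case I handle things directly, since then $g^n(y)$ is itself essentially a $k$-th power times a controlled monomial factor and the equation becomes over-determined unless $x,y$ range over a finite set — or else some iterate $g^j$ with $j\leq 6$ satisfies the LeVeque condition with respect to $k$. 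In the latter case, writing $g^n(y)=g^j((g^{n-j})(y))$ for $n\geq j$ (handling the finitely many $n<j$ separately) and noting $g^j(Y)=z^k$ puts $Y=g^{n-j}(y)$ as a solution of a superelliptic equation satisfying the LeVeque condition with bounded $N_S(b)$ (here $b$ is absorbed into the leading coefficient of $g^j$, so $N_S(b)\ll 1$). Theorem~\ref{thm:Superell} then gives $k\ll 1$ and $h(Y)\ll 1$; symmetrically $\ell\ll 1$ (applying Lemma~\ref{lem:IterLeVeque} to $f$, using that we may assume WLOG $k>1$ or $\ell>1$ and treat the larger exponent, exactly as in \textsection\ref{sec:BT}). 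This gives (b).

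For part (c): fix coprime $k,\ell$ and suppose the stated non-degeneracy, i.e.\ $(f(X),g(X))$ is not of the form $(X^s\tilde f(X)^\ell,X^t\tilde g(X)^k)$, and likewise $(f^2,g)$ if $k=2$ and $(f,g^2)$ if $\ell=2$. We want finiteness of pairs $(z,w)\in\cO_f(x)\times\cO_g(y)$ with $z^k=w^\ell$. Write $z=f^m(x)$, $w=g^n(y)$. As in the proof of Theorem~\ref{thm:MDBT1} in \textsection\ref{sec:BT}, the relation $f^m(x)^k=g^n(y)^\ell$ together with $ak+b\ell=1$ yields $g^n(y)=(g^n(y)^a f^m(x)^b)^k$, an $S$-integral $k$-th power (the root of unity is trivial here since $z^k=w^\ell$ exactly). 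Now apply Lemma~\ref{lem:IterLeVeque} to $g$ with exponent $k$: because of the hypothesis excluding the shape $X^t\tilde g(X)^k$ for $g$ (and for $g^2$ when $k=2$), Lemma~\ref{lem:IterLeVeque} guarantees that $(g^j,k)$ satisfies the LeVeque condition for some $j\leq 6$ (the exceptional "$f^2$ is of the form" escape is precisely the $k=2$, $(f,g^2)$-shape case we have also excluded — note one must match Lemma~\ref{lem:IterLeVeque}'s "$f^2$" clause with the appropriate polynomial among $g$ here). Then, exactly as in (b), $g^j(Y)=z^k$ with $Y=g^{n-j}(y)$ is a superelliptic equation satisfying the LeVeque condition and with $N_S(b)\ll 1$, so $h(Y)\ll 1$, whence (Northcott) there are only finitely many possible $Y$, hence finitely many $n$ (the orbit being infinite). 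Symmetrically, writing $f^m(x)=(\cdots)^\ell$ and applying Lemma~\ref{lem:IterLeVeque} to $f$ with exponent $\ell$ (using the excluded shape $X^s\tilde f(X)^\ell$ for $f$, and $(f^2,g)$ when $\ell=2$), we bound $m$. Hence only finitely many pairs $(m,n)$, proving (c).

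\emph{Main obstacle.} The crux is part (c) together with the bookkeeping in Lemma~\ref{lem:IterLeVeque}: one must carefully line up the two exceptional shapes in Proposition~\ref{prop:MDGTZprelim}(c) (for $f$ with exponent $\ell$, and for $g$ with exponent $k$) with the single exceptional "$f^2$ is of the form $X^sp(X)^m$" clause in Lemma~\ref{lem:IterLeVeque}, and make sure that when that clause fires — forcing us to replace $g$ by $g^2$ (or $f$ by $f^2$) — the hypothesis we excluded is exactly $(f^2(X),g(X))$ or $(f(X),g^2(X))$ of the product form. A secondary technical point is controlling the "$b$" in the superelliptic equation: after clearing denominators, $g^j(Y)-z^k\cdot(\text{leading coeff})=0$ should be arranged so that the relevant $N_S(b)$ is bounded purely in terms of $f,g,S$ (not $m,n$), which needs that the monic part of $g^j$ has fixed height; this is where enlarging $S$ at the outset to contain the leading coefficients of all iterates (equivalently, just the leading coefficients of $f$ and $g$, since those generate the rest) is essential. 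Finally, the fully degenerate monomial-times-power cases (when Lemma~\ref{lem:IterLeVeque}'s hypothesis itself fails, i.e.\ $g$ really is $X^t p(X)^k$) must be disposed of by hand; but in those cases the pair $(f,g)$ falls under the excluded form in (c), so they simply do not arise — one just needs to state this reduction cleanly.
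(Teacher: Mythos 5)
Your overall strategy---specialize to a number field via Proposition~\ref{prop:Specialisation} and then run the $S$-integer and superelliptic arguments of \textsection\ref{sec:BT} on the iterates---is exactly the paper's, and your treatment of (a) and the inductive descent matches the paper.

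There is, however, a genuine gap in your argument for (c). You attempt to apply Lemma~\ref{lem:IterLeVeque} to $g$ (with exponent $k$) \emph{and} symmetrically to $f$ (with exponent $\ell$), justifying each application by ``the hypothesis excluding the shape $X^t\tilde g(X)^k$ for $g$'' and the analogous exclusion for $f$. But the hypothesis of (c) is that the \emph{pair} $(f,g)$ is not of the form $(X^s\tilde f(X)^\ell, X^t\tilde g(X)^k)$; this is a single disjunction, and it permits one of the two polynomials to be of the bad shape so long as the other is not. In particular, when $\ell=1$ the condition on $f$ is vacuous and the hypothesis says nothing beyond ``$g$ is not of the form'', so applying Lemma~\ref{lem:IterLeVeque} to $f$ is impossible; and when both $k,\ell>1$ the hypothesis may hold purely because $f$ avoids the shape, leaving $g$ free to be exactly $X^t\tilde g(X)^k$. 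Thus Lemma~\ref{lem:IterLeVeque} can legitimately be invoked for at most one of the two polynomials. The paper's proof avoids this: it applies Lemma~\ref{lem:IterLeVeque} and the second conclusion of Theorem~\ref{thm:Superell} only to whichever polynomial the hypothesis certifies, bounding that index; then it recovers the other index from the identity $k\,h(f^n(x))=\ell\,h(g^m(y))$ (obtained by taking heights in $z^k=w^\ell$), which forces bounded height on the other orbit element without any use of Lemma~\ref{lem:IterLeVeque} for the second polynomial. You are missing this height step, and your symmetric argument cannot be repaired without it.

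A secondary inefficiency: for (b) you route through Lemma~\ref{lem:IterLeVeque} and the LeVeque condition and then wave your hands at the degenerate shape $X^t p(X)^k$, whereas the paper simply invokes the \emph{first} conclusion of Theorem~\ref{thm:Superell} (the Schinzel--Tijdeman-type bound on the exponent, which does not require the LeVeque condition) applied to $f$ or $g$ directly; the degenerate case never arises. If you insist on your route, you should note that $g=X^t p(X)^k$ with $p$ non-constant forces $k\leq\deg g$ (and similarly for $g^2$), so the degenerate cases only occur for bounded $k$, which is what (b) asserts anyway. Finally, the bookkeeping concern you raise about matching the $f^2$/$g^2$ clauses is well taken: since $f^n(x)$ must be shown to be an $\ell$-th power and $g^m(y)$ a $k$-th power, the second-iterate escape in Lemma~\ref{lem:IterLeVeque} concerns $f^2$ when $\ell=2$ and $g^2$ when $k=2$ (this is indeed how the paper's own proof proceeds, although the statement of (c) appears to have the two cases transposed); you have the assignment right in your displayed derivation but the quoted hypothesis wrong, so be consistent.
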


\begin{proof}
Write $f(X) = a_sX^s + \cdots + a_0$ and $g(X) = b_tX^t + \cdots + b_0$. Let 
$$
K = \Q \left( x,y,a_0,\ldots,a_s,b_0,\ldots,b_t, f^{-1}(0),g^{-1}(0),f^{-2}(0),g^{-2}(0) \right). 
$$
We proceed by induction on $\mathrm{trdeg}(K/\Q)$. For the base case, where $K$ is a number field, let 
$$
S = M_K^\infty \cup \left \{ v \in M_K^0 : \min_{0 \leq i \leq s, 0 \leq j \leq t} \{ v(a_i), v(b_j), v(x),v(y) \} < 0 \right \}.
$$
Then for all $n,m \in \Z^+$, $f^n(x), g^m(y) \in \mathfrak{o}_S$. By \cite[Proposition~1.6]{KLS}, both $\cO_f(x) \cap \mathfrak{o}_S^*$ and $\cO_g(y) \cap \mathfrak{o}_S^*$ are finite. Thus, if $\cO_f(x) \times \cO_g(y)$ is infinite, and $(z,w) \in \cO_f(x) \times \cO_g(y)$ is such that $z^k=w^\ell$ with $k,\ell$ integers of different signs, then upon omitting finitely many possibilities we may assume that $|z|_v < 1$ for some $v \in S$, and hence
$$
1 \geq |w|_v = |z|_v^{k/\ell} > 1,
$$
a contradiction. This gives (a) in this case.

Suppose $k,\ell \in \Z^+$ are coprime, and that $(z,w)=(f^n(x),g^m(y))$ satisfies $z^k=\zeta w^\ell$ for a root of unity $\zeta$. By the same argument which follows \eqref{eq:PerfPower}, we have $z=\alpha^\ell$ and $w=\beta^k$ for some $\alpha,\beta \in \mathfrak{o}_S$. Then the first conclusion of Theorem~\ref{thm:Superell} gives $k,\ell \ll 1$ as required for (b). Moreover, if $\ell > 1$, and $f(X)$ (also $f^2(X)$ if $\ell=2$) is not of the form $X^s \tilde f(X)^\ell$ for some $s \geq 0$ and $\tilde f \in \overline K[X]$, then by Lemma~\ref{lem:IterLeVeque}, $(f^j,\ell)$ satisfies the LeVeque condition for some $j \leq 6$. Thus, by the second part of Theorem~\ref{thm:Superell}, we have $h(f^{n-j}(x)) \ll 1$. In particular either $x$ is preperiodic for $f$ or $n \ll 1$. Moreover, noting that $h(g^m(y)) = k h(f^n(x))/\ell$, we see also that either $y$ is preperiodic for $g$ or $m \ll 1$. The same conclusion holds (swapping $f$ and $g$) if $k > 1$. This proves (c), completing the base case.

For the inductive step, let $E$ be a subfield of $K$ with $\mathrm{trdeg}(K/E) = 1$, and $E/\Q$ finitely generated. First suppose that there are infinitely many pairs $(z,w) \in \cO_f(x) \times \cO_g(y)$ such that $z^k=w^\ell$ with $k,\ell$ non-zero integers of different signs. Without loss of generality assume that $x$ is not preperiodic for $f$. Let $R$, $E'$ and $\varphi : R \to E'$ be as in Proposition~\ref{prop:Specialisation}. Properties (1) and (2) imply that there are infinitely many pairs $(z,w) \in \cO_{f^\varphi}(\varphi(x)) \times \cO_{g^\varphi}(\varphi(y))$ such that $z^k=w^\ell$ with $k,\ell$ non-zero integers of different signs. Thus, by the induction hypothesis, $f^\varphi$ or $g^\varphi$ is linear or a monomial, contradicting (1). This proves (a).

For (b), suppose there is a sequence $\{ (z_j,w_j) \}_{j =1}^\infty \subset \cO_f(x) \times \cO_g(y)$, roots of unity $\zeta_j$ and coprime $k_j,\ell_j \in \Z^+$ with $\max(k_j,\ell_j) \to \infty$ as $j \to \infty$, such that $z_j^{k_j}=\zeta_j w_j^{\ell_j}$. The same argument as above implies the existence of such a sequence in $\cO_{f^\varphi}(\varphi(x)) \times \cO_{g^\varphi}(\varphi(y))$, again contradicting the induction hypothesis. An analogous argument also proves (c) in light of property (3) (changing $R$, $E'$ and $\varphi$ to respect the pairs $(f^2,g)$ or $(f,g^2)$ when necessary), completing the proof.
\end{proof}

\begin{proof}[Proof of Theorem~\ref{thm:MDGTZ}]
Suppose, as in the statement of the theorem, that we have $f^i(X)=X^s \tilde f(X)^\ell$, $g^j(X)=X^t \tilde g(X)^k$, and that $\hat f$ and $\hat g_\xi$ share a common iterate, say $\hat f^n = \hat g_\xi^m$, where $\hat f(X) = X^s \tilde f(X^\ell)$, $\hat g(X)=X^t \tilde g(X^k)$. Then for any $N \in \Z^+$
\begin{equation} \label{eq:Semiconjugacy}
f^{iN}(X^\ell) = \hat f^N(X)^\ell, \quad g^{jN}(X^k) = \hat g^N(X)^k.
\end{equation}
Let $x \in \C$ be non-preperiodic for $f$, and let $y$ be an $\ell$-th root of $x$. Then for any $r \in \Z^+$, using \eqref{eq:Semiconjugacy} we obtain
\begin{align*}
f^{inr}(x)^k & = f^{inr}(y^\ell)^k = \hat f^{nr}(y)^{k\ell} \\
& = \hat g_\xi^{mr}(y)^{k\ell} = \xi^{k\ell} \hat g^{mr}(\xi^{-1} y)^{k\ell} \\
& = \xi^{k\ell} g^{jmr}(\xi^{-k} y^k)^\ell,
\end{align*}
and so we have infinitely many pairs $(f^{inr}(x), g^{jmr}(\xi^{-k} y^k))$, $r \in \Z^+$, which are multiplicatively dependent of rank 1.

For the converse, write $f(X) = a_sX^s + \cdots + a_0$, $g(X) = b_tX^t + \cdots + b_0$, and let $K = \Q(x,y,a_0,\ldots,a_s,b_0,\ldots,b_t)$. Suppose $(z,w)=(f^n(x),g^m(y)) \in \cO_f(x) \times \cO_g(y)$ is multiplicatively dependent of rank 1. Then $z^{k'}=w^{\ell'}$ for some non-zero integers $k',\ell'$. By Proposition~\ref{prop:MDGTZprelim}~(a), we may assume without loss of generality that $k',\ell' > 0$. Then $z^k = \zeta w^\ell$, where $k=k'/\gcd(k',\ell,)$, $\ell = \ell'/\gcd(k',\ell')$ are coprime, and $\zeta$ is one of finitely many roots of unity contained in $K$. By Proposition~\ref{prop:MDGTZprelim}~(b), we have $k,\ell \ll 1$. Hence, if there are infinitely many pairs $(z,w) \in \cO_f(x) \times \cO_g(y)$ which are multiplicatively dependent of rank 1, then infinitely many of them satisfy $z^k = \zeta w^\ell$ for some fixed coprime $k,\ell \in \Z^+$ and root of unity $\zeta \in K$. By Proposition~\ref{prop:MDGTZprelim}~(c), this cannot be the case unless $(f(X),g_\zeta(X))$ is of the form $(X^s \tilde f(X)^\ell, X^t \tilde g(X)^k)$ for some $s,t \geq 0$ and $\tilde f, \tilde g \in \C[X]$. In this scenario, we have the semiconjugacies $f(X^\ell)=\hat f(X)^\ell$ and $g_\zeta(X^k)=\hat g(X)^k$, where $\hat f(X) = X^s \tilde f(X^\ell)$ and $\hat g(X) = X^t \tilde g(X^k)$. Then $f^n(X^\ell) = \hat f^n(X)^\ell$, $g_\zeta^m(X^k) = \hat g(X)^k$, and we can write $z^k=\zeta w^\ell$ as
$$
\hat f^n(\hat x)^{k \ell} = \hat g^m(\hat y)^{k \ell},
$$
where $\hat x^\ell = x$ and $\hat y^k=\zeta y$. We conclude that
$$
\hat f^n(\hat x)=\hat g_{\xi}^m(\xi \hat y),
$$
where $\xi$ is another root of unity. The result then follows from Theorem~\ref{thm:GTZ}.
\end{proof}

\section{Sparseness of multiplicatively dependent iterates} \label{sec:Sparse}

\subsection{Rigid divisibility sequences} \label{subsec:RDS} Recall that a sequence $(a_n)_{n=1}^\infty$ of integers is a \emph{divisibility sequence} if whenever $m \mid n$, we have $a_m \mid a_n$. We say that a divisibility sequence $(a_n)_{n=1}^\infty$ is a \emph{rigid divisibility sequence} if additionally, for every prime $p$, there is an exponent $s_p$ such that for each $n \in \Z^+$, either $p \nmid a_n$ or $p^{s_p} \| a_n$. Rigid divisibility sequences generated by polynomial iteration have been studied in the context of the density of prime divisors \cite{J} as well as the existence of primitive prime divisors (see \textsection \textsection \ref{subsec:PPD}) in arithmetic dynamics. It turns out such sequences also provide examples where it is not too difficult to show that the sets $M_{F,\bm{x}}(N)$, defined before the statement of Theorem~1.7, are sparse. We have the following examples \cite[Propositions~3.2~and~3.5]{R} (see also \cite[Theorems~1~and~3]{R2}).

\begin{prop}
Suppose $f \in \Z[X]$ is a monic polynomial with linear coefficient 0. Then $\{ f^n(0) \}_{n \geq 1}$ is a rigid divisibility sequence.
\end{prop}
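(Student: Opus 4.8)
The plan is to first establish the divisibility property, then the rigidity property, both by tracking the $p$-adic valuation of $f^n(0)$ for a fixed prime $p$. Write $a_n = f^n(0)$, and note that since $f$ is monic with zero linear coefficient, we have $f(X) = X^2 h(X) + c$ for some $c = f(0) \in \Z$ and $h \in \Z[X]$; more usefully, $f(X) \equiv f(0) = a_1 \pmod{X^2}$. The key elementary input I would isolate as a sublemma is: for any $u, v \in \Z$, $f(u) \equiv f(v) \pmod{(u-v)^2}$ — this follows because $f(u) - f(v) = (u-v)\bigl(f'(v) + O(u-v)\bigr)$ and $f'(v) = 2vh(v) + v^2 h'(v)$, wait, more simply because the linear coefficient of $f$ vanishes so $f'(0)=0$, hence $f(u) \equiv f(0) \pmod{u^2}$ for all $u$, and then $f(u) - f(v)$ is divisible by $u-v$ with the quotient congruent to $f'(v)$ mod $(u-v)$, and we get the cruder but sufficient statement $f(u) \equiv f(v) \pmod{(u-v)}$. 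Actually for rigid divisibility the sharper mod-square statement is what is needed, so I would prove $f(u) \equiv f(v) \pmod{(u-v)^2}$ directly from Taylor expansion: $f(u) = f(v) + f'(v)(u-v) + \binom{f}{\geq 2}$-terms, and observe $f'(v) \equiv f'(0) = 0 \pmod v$ is not quite enough — rather one uses that $f(X) - f(0) \in X^2\Z[X]$, so $f(u) - f(v) = (f(u) - f(0)) - (f(v) - f(0))$, and substituting $X \mapsto u, v$ into an element of $X^2 \Z[X]$ and subtracting gives something in $(u-v)\Z[\ldots]$; the mod-square refinement comes from the mean-value-style identity $g(u) - g(v) = (u-v) g'(v) + (u-v)^2(\ldots)$ applied to $g \in X^2\Z[X]$, where $g'(v) = 2vh(v)+\cdots \in v\Z$. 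The cleanest route: $f(u) \equiv f(v) \pmod{u - v}$ always (standard), and then iterate.

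For the divisibility property: suppose $m \mid n$. I want $a_m \mid a_n$. Writing $n = md$, we have $a_n = f^{md}(0) = f^{m}\bigl(f^{m(d-1)}(0)\bigr)$, so by induction on $d$ it suffices to show $a_m = f^m(0) \mid f^m(a_{m(d-1)})$ given $a_m \mid a_{m(d-1)}$. But $f^m(u) \equiv f^m(v) \pmod{u - v}$ (iterating the one-step congruence $f(u)\equiv f(v) \pmod{u-v}$, noting $f(u)-f(v)$ divides $f^2(u)-f^2(v)$, etc.), so taking $u = a_{m(d-1)}$, $v = 0$ gives $f^m(a_{m(d-1)}) \equiv f^m(0) = a_m \pmod{a_{m(d-1)}}$, hence $\pmod{a_m}$ since $a_m \mid a_{m(d-1)}$. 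Therefore $a_n \equiv a_m \equiv 0 \pmod{a_m}$, completing the induction.

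For the rigidity property: fix a prime $p$ and suppose $p \mid a_n$ for some $n$; let $n_0$ be the minimal such $n$ and set $s_p := v_p(a_{n_0})$. I claim that for every $n$ with $p \mid a_n$ we have $v_p(a_n) = s_p$. First, by the minimality of $n_0$ and the divisibility property, $n_0 \mid n$ (a standard consequence: if $p\mid a_n$ then $p \mid a_{\gcd(n,n_0)}$ because $a_{\gcd(n,n_0)} \mid a_n$ fails in general — instead use that the index set $\{n : p \mid a_n\}$ is closed under the relevant operation; here I would argue via $a_n \equiv a_{n \bmod n_0}$-type reductions using $f^{n_0}(u) \equiv f^{n_0}(0) \pmod{p^{s_p}}$, which shows the orbit of $0$ mod $p^{s_p}$ is eventually periodic with period dividing... ). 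The efficient argument: since $p^{s_p} \mid a_{n_0}$, the one-step mod-square congruence gives $f^{n_0}(u) \equiv f^{n_0}(0) = a_{n_0} \equiv 0 \pmod{p^{s_p}}$ for any $u \equiv 0 \pmod{p^{?}}$ — more precisely $f(u) \equiv f(0) \pmod{u^2}$ shows that if $p^e \| u$ then $v_p(f(u) - a_1)\geq 2e$. The heart of the matter is: if $p^{s_p}\| a_{n_0}$ and $a_{n_0}\mid a_{n_0 d}$, then $v_p(a_{n_0 d}) = v_p(a_{n_0})$ exactly, because $a_{n_0 d} = f^{n_0(d-1)}$ composed... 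I would show $a_{n_0 d} \equiv a_{n_0} \pmod{a_{n_0}^2}$ using the mod-square congruence $f^{n_0}(u) \equiv f^{n_0}(v) \pmod{(u-v)^2}$ with $u = a_{n_0(d-1)}$, $v = 0$: since $a_{n_0}\mid u - v = a_{n_0(d-1)}$, we get $a_{n_0 d} = f^{n_0}(u) \equiv f^{n_0}(0) = a_{n_0} \pmod{a_{n_0}^2}$, hence $\pmod{p^{2s_p}}$, which forces $v_p(a_{n_0 d}) = v_p(a_{n_0}) = s_p$. It remains to rule out $p \mid a_n$ with $n_0 \nmid n$: this follows because $p \mid a_n$ would by the above arguments force $p \mid a_{\gcd(n, n_0 d)}$ for suitable $d$, contradicting minimality; concretely, reducing the orbit of $0$ modulo $p$, the sequence $(a_n \bmod p)$ is purely periodic after we know $a_{n_0}\equiv 0$, and $0$ reappears exactly at multiples of $n_0$.

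The main obstacle I anticipate is the clean handling of the mod-square congruence and, relatedly, pinning down that $p \mid a_n \Rightarrow n_0 \mid n$; the naive "$\gcd$ divides" argument for general divisibility sequences does not apply since $\{n : p \mid a_n\}$ need not be an ideal-like set without extra structure. The resolution is exactly the special form of $f$: because $f$ has zero linear coefficient, $v_p\bigl(f(u)-f(0)\bigr) \geq 2\, v_p(u)$, so once $0$ maps (after $n_0$ steps) to something divisible by $p^{s_p}$, the orbit mod $p^{s_p}$ cycles with $0$ recurring precisely on the lattice $n_0\Z$, and the valuation never climbs above $s_p$ on that lattice nor reaches $s_p$ off it. I would present this as: (i) the sublemma $f(u) \equiv f(v) \pmod{(u-v)^2}$ when $f'(0)=0$ and $f$ monic integral (equivalently $f(X)-f(0) \in X^2\Z[X]$); (ii) iterate to $f^k(u)\equiv f^k(v)\pmod{(u-v)^2}$; (iii) deduce divisibility; (iv) deduce rigidity with $s_p = v_p(a_{n_0})$.
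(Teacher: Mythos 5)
The paper does not give its own proof of this proposition; it is quoted directly from Rice (\cite[Propositions~3.2~and~3.5]{R}, see also \cite[Theorems~1~and~3]{R2}), so there is no argument in the source to compare against. Evaluated on its own terms, your outline is the standard approach to this fact, but it has one genuine flaw that must be fixed before the proof can be written cleanly.

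The ``key sublemma'' you isolate, namely $f(u)\equiv f(v)\pmod{(u-v)^2}$ for arbitrary $u,v\in\Z$, is false, even for monic $f$ with vanishing linear coefficient: take $f(X)=X^2$, $u=4$, $v=1$, so that $f(u)-f(v)=15$ while $(u-v)^2=9$ does not divide $15$. You seem to sense this mid-derivation (the Taylor-expansion attempt stalls precisely at needing $(u-v)\mid f'(v)$, which has no reason to hold), and you drift back and forth between that false general statement and the two statements that are actually true and actually all you need: the standard $f(u)\equiv f(v)\pmod{u-v}$ for any $f\in\Z[X]$, and $f(u)\equiv f(0)\pmod{u^2}$, the latter immediate from $f(X)-f(0)\in X^2\Z[X]$ when the linear coefficient vanishes. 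Every invocation of the mod-square congruence in your rigidity step is with $v=0$, so only the correct specialization is ever used; but you also apply it to the iterate $f^{n_0}$ rather than to $f$ itself, and that transfer deserves a line of its own: $f^k(u)-f^k(0)=f\bigl(f^{k-1}(u)\bigr)-f\bigl(f^{k-1}(0)\bigr)$ is divisible by $f^{k-1}(u)-f^{k-1}(0)$, hence by $u^2$ by induction on $k$ (equivalently, $(f^k)'(0)=0$ by the chain rule, since $f'(0)=0$). With the sublemma restated in the correct form, $f^k(u)\equiv f^k(0)\pmod{u^2}$ for all $k\geq 1$ and $u\in\Z$, the remainder of your plan is sound: divisibility follows from $f^m(u)\equiv f^m(0)\pmod{u}$ applied with $u=a_{m(d-1)}$; the valuation rigidity follows from $a_{n_0 d}\equiv a_{n_0}\pmod{a_{n_0}^2}$, which forces $v_p(a_{n_0 d})=v_p(a_{n_0})=s_p$; and the reduction to $n_0\mid n$ follows from $a_n\equiv a_{n\bmod n_0}\pmod{p}$ together with the minimality of $n_0$.
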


\begin{prop}
Suppose $f \in \Z[X]$ is monic, and that $\{ f^n(0) \}_{n \geq 1}$ is a rigid divisibility sequence. If $r \in \Z$ is a root of $f$, and $g(X):=f(X+r)-r$, then $\{ g^n(0) \}_{n \geq 1}$ is a rigid divisibility sequence.
\end{prop}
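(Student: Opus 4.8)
The plan is to exploit that $g$ is the conjugate of $f$ by the integer translation $\tau(X)=X+r$: since $g=\tau^{-1}\circ f\circ\tau$, we get $g^{n}(X)=f^{n}(X+r)-r$, so $g^{n}(0)=f^{n}(r)-r$. As $r$ is a root of $f$, the $f$-orbit of $r$ is $r,0,f(0),f^{2}(0),\dots$, so $f^{n}(r)=f^{n-1}(0)$ for $n\ge 1$ (with $f^{0}=\mathrm{id}$), and hence
\[
g^{n}(0)=f^{n-1}(0)-r\qquad(n\ge 1).
\]
I would first note that we may assume $r\neq 0$ (otherwise $g=f$), and that all these terms are nonzero: if $f^{m}(0)=r$ for some $m\ge 0$, then $f^{m+1}(0)=f(r)=0$ would be a vanishing term of $\{f^{n}(0)\}$, contradicting its rigidity.

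\noindent\textbf{Divisibility sequence.} For \emph{any} $h\in\Z[X]$, the sequence $\{h^{n}(0)\}_{n\ge 1}$ is a divisibility sequence: if $m\mid n$, then $h^{n}(0)=h^{n-m}(h^{m}(0))\equiv h^{n-m}(0)\pmod{h^{m}(0)}$ (using $a-b\mid P(a)-P(b)$), so induction on $n/m$ gives $h^{m}(0)\mid h^{n}(0)$. I would apply this with $h=g$.

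\noindent\textbf{Rigidity at a prime $p$.} The key input is the standard $p$-adic description of rigidity. If $n_{0}\ge 1$ is minimal with $p\mid h^{n_{0}}(0)$, then the $h$-orbit of $0$ modulo $p$ is purely periodic of period $n_{0}$, so $p\mid h^{n}(0)\iff n_{0}\mid n$; and writing $a=h^{n_{0}}(0)$ with $v_{p}(a)=s$ and $\lambda=(h^{n_{0}})'(0)$, expanding $h^{n_{0}}$ about $0$ gives, inductively in $k$, that $h^{kn_{0}}(0)=a\,\mu_{k}$ with $\mu_{k}\equiv 1+\lambda+\cdots+\lambda^{k-1}\pmod p$. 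Hence $\{h^{n}(0)\}$ is rigid at $p$, with common valuation $s$, precisely when $p\mid\lambda$ (if $p\nmid\lambda$ then $\mu_{t}\equiv0\pmod p$ for $t$ the multiplicative order of $\lambda$ mod $p$, or $t=p$ if $\lambda\equiv1$). This is the step one may simply cite from the references. Now fix $p$; if $p$ divides no $g^{n}(0)$ we are done, so let $\tilde n_{0}\ge 1$ be minimal with $p\mid g^{\tilde n_{0}}(0)$, i.e.\ $f^{\tilde n_{0}-1}(0)\equiv r\pmod p$. Then $f^{\tilde n_{0}}(0)=f(f^{\tilde n_{0}-1}(0))\equiv f(r)=0\pmod p$, so the minimal $n_{0}$ with $p\mid f^{n_{0}}(0)$ exists and $n_{0}\mid\tilde n_{0}$; the criterion applied to $f$ gives $p\mid(f^{n_{0}})'(0)$, and since $(f^{n_{0}})'(0)=\prod_{i=0}^{n_{0}-1}f'(f^{i}(0))$ divides $(f^{\tilde n_{0}})'(0)=\prod_{i=0}^{\tilde n_{0}-1}f'(f^{i}(0))$, also $p\mid(f^{\tilde n_{0}})'(0)$.

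\noindent\textbf{Transferring the multiplier.} Finally I would differentiate $g^{\tilde n_{0}}(X)=f^{\tilde n_{0}}(X+r)-r$ to get $(g^{\tilde n_{0}})'(0)=(f^{\tilde n_{0}})'(r)=\prod_{i=0}^{\tilde n_{0}-1}f'(f^{i}(r))$. Since $f^{0}(r)=r\equiv f^{\tilde n_{0}-1}(0)\pmod p$ while $f^{i}(r)=f^{i-1}(0)$ for $1\le i\le\tilde n_{0}-1$, the multiset $\{f^{i}(r)\}_{i=0}^{\tilde n_{0}-1}$ coincides modulo $p$ with $\{f^{i}(0)\}_{i=0}^{\tilde n_{0}-1}$, so $(g^{\tilde n_{0}})'(0)\equiv(f^{\tilde n_{0}})'(0)\equiv0\pmod p$. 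By the criterion applied to $g$, $\{g^{n}(0)\}$ is rigid at $p$; as $p$ was arbitrary, this completes the proof. I expect the main obstacle to be the $p$-adic analysis behind the multiplier criterion — in particular, establishing that rigidity \emph{forces} $p\mid\lambda$, which rests on the geometric-series shape of the iterated multipliers — together with the bookkeeping in the last step identifying the cycle multipliers of $f$ and $g$ modulo $p$; the conjugacy identities and the divisibility-sequence step are routine.
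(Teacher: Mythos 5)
The paper states this proposition without proof, citing Rice \cite[Prop.~3.5]{R} (see also \cite[Thm.~3]{R2}), so there is no in-paper argument to compare against; your proposal supplies one, and it is correct. The conjugacy identity $g^{n}(0)=f^{n-1}(0)-r$ is right; the check that $\{h^{n}(0)\}$ is a divisibility sequence for any $h\in\Z[X]$ is standard; and the observation that rigidity of $\{f^{n}(0)\}$ forbids $f^{m}(0)=r$ (otherwise $f^{m+1}(0)=0$, and a rigid divisibility sequence cannot contain $0$ since $v_{p}(0)=\infty$ for every $p$, so no finite $s_{p}$ works) correctly rules out zero terms of $\{g^{n}(0)\}$. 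The crucial ingredient — that rigidity of $\{h^{n}(0)\}$ at $p$ is equivalent to $p\mid(h^{n_{0}})'(0)$, where $n_{0}$ is least with $p\mid h^{n_{0}}(0)$ — is stated correctly, and your sketch via the recursion $\mu_{k+1}\equiv 1+\lambda\mu_{k}\pmod p$ (which needs $h^{n_{0}}(0)\mid h^{kn_{0}}(0)$, supplied by the divisibility-sequence step) is the right argument in both directions. The transfer is also sound: $n_{0}\mid\tilde n_{0}$ gives $(f^{n_{0}})'(0)\mid(f^{\tilde n_{0}})'(0)$ via the chain-rule factorisation, and $r\equiv f^{\tilde n_{0}-1}(0)\pmod p$ makes the multiplier products for $f$ and $g$ along the first $\tilde n_{0}$ steps agree mod $p$. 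As a small streamlining you may note that writing $f(X)=(X-r)q(X)$ gives the exact identity $f^{n}(0)=g^{n}(0)\,q\bigl(f^{n-1}(0)\bigr)$ with $q(r)=f'(r)$; when $p\nmid f'(r)$ this yields $v_{p}(g^{n}(0))=v_{p}(f^{n}(0))$ directly from rigidity of $f$, and when $p\mid f'(r)$ one needs only the ``if'' direction of the multiplier criterion, since $f'(r)=f'(f^{0}(r))$ is already a factor of $(g^{\tilde n_{0}})'(0)$. But your route is equally valid.
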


We note that the largest square-free factor of a polynomial value generally grows with the input.

\begin{lemma} \label{lem:LrgSqFree}
Let $f \in \Z[X]$ be such that $(f,2)$ satisfies the LeVeque condition. Then for $x \in \Z$, the largest square-free factor $Q^+(f(x))$ of $f(x)$ satisfies
$$
\log Q^+(f(x)) \gg \log \log |x|,
$$
where the implied constant depends only on $f$.
\end{lemma}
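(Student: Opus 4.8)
The plan is to reduce the statement to the quantitative bounds on superelliptic equations recalled in Theorem~\ref{thm:Superell}. Fix $f \in \Z[X]$ satisfying the LeVeque condition for $m=2$. Given $x \in \Z$, write $f(x) = b y^2$ where $b$ is squarefree (so $b$ is, up to sign, the ``squarefree part'' of $f(x)$, and $|b| \le Q^+(f(x))$ trivially, in fact $Q^+(f(x)) \ge |b|$ and also the prime divisors of $b$ all divide $Q^+(f(x))$). Then $(x,y)$ is a solution of the equation $f(x) = b y^2$ of the shape \eqref{eq:Superell} with $\cO_S = \Z$, $K = \Q$, $S = M_\Q^\infty$, and $m=2$. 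If $y \in \{0\} \cup \{\pm 1\}$ then $|f(x)| = |b| \le Q^+(f(x))$, which already gives an even stronger bound (linear in $\log|x|$) for $|x|$ large; so we may assume $y \notin \mathfrak{o}_S^* \cup \{0\}$. Since $(f,2)$ satisfies the LeVeque condition, Theorem~\ref{thm:Superell} applies and yields
$$
h(x) \ll N_S(b)^{16 r^3},
$$
where $r$ is the number of distinct roots of $f$ (a constant depending only on $f$) and the implied constant depends only on $f$ (here $K$ and $S$ are fixed). Now $h(x) = \log|x|$ for $x \in \Z$, and $N_S(b) = |b|_\infty = |b| \le Q^+(f(x))$ since all archimedean places contribute and $b \in \Z$. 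Taking logarithms of the displayed inequality gives
$$
\log\log|x| \ll \log N_S(b) \ll \log Q^+(f(x)),
$$
which is exactly the claimed bound, with implied constant depending only on $f$.

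The one point that needs a little care is the passage from ``$(f,2)$ satisfies the LeVeque condition'' to ``$(f, b\,\cdot\,,2)$ falls under Theorem~\ref{thm:Superell}''. Theorem~\ref{thm:Superell} is stated for the equation $f(x) = b y^m$ with $b$ a fixed nonzero $S$-integer, but here $b = b(x)$ varies with $x$. However, the bound it provides, $h(x) \ll N_S(b)^{16r^3}$, has implied constant depending only on $K$, $S$ and $f$ — \emph{not} on $b$ — and the LeVeque condition only involves the exponents $m_i = m/\gcd(m,e_i)$, which are unchanged when we multiply the right-hand side by a constant. So for each fixed $x$ we apply the theorem with $b = b(x)$, obtaining a bound with a uniform implied constant; this is precisely what lets us conclude. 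I expect this uniformity check to be the main (and essentially only) obstacle; the rest is bookkeeping with heights and norms over $\Q$.

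Finally, one should observe that the cases excluded above (small $y$, or $x$ in the finite exceptional set where $|x|$ is bounded) are harmless: for $|x|$ below any fixed bound the inequality $\log Q^+(f(x)) \gg \log\log|x|$ holds vacuously by adjusting the implied constant, and we already handled $y \in \{0,\pm1\}$ directly. This completes the argument.
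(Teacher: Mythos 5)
Your proof is correct and takes essentially the same route as the paper: reduce to the quantitative superelliptic bound (Theorem~\ref{thm:Superell}) with $K=\Q$, $S=\{\infty\}$, $m=2$, applied with a $b$ that divides the squarefree kernel of $f(x)$, and then observe that the resulting height bound is uniform in $b$. The only cosmetic difference is your choice of $b$ as the exact squarefree part of $f(x)$, whereas the paper takes $b = \pm p_1^2\cdots p_k^2 q_1\cdots q_\ell$ (so $|b| \le Q^+(f(x))^2$ rather than $\le Q^+(f(x))$), which yields the slightly worse but equally sufficient exponent $32r^3$ in place of $16r^3$; your careful aside on the cases $y \in \{0,\pm 1\}$ and on the uniformity of the implied constant in $b$ is exactly the right thing to check, though note that when $y=0$ (so $f(x)=0$) there are only finitely many such $x$ and they are harmlessly absorbed.
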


\begin{proof}
Let $x \in \Z$ and write
$$
f(x) = \pm p_1^{2a_1} \cdots p_k^{2a_k} q_1^{2b_1+1} \cdots q_\ell^{2b_\ell+1}
$$
for distinct primes $p_1,\ldots,p_k,q_1,\ldots, q_\ell$ and integers $a_1,\ldots,a_k \geq 1$, $b_1,\ldots,b_\ell \geq 0$. Then
$$
f(x) = \pm p_1^2 \cdots p_k^2 q_1 \cdots q_\ell y^2, \quad y \in \Z.
$$
Note that $Q^+(f(x)) = p_1 \cdots p_k q_1 \cdots q_\ell$,
so applying Theorem~\ref{thm:Superell} with $S=\{ \infty \}$ and $b=\pm p_1^2 \cdots p_k^2 q_1 \cdots q_\ell$, we have $N_S(b)=|b| \leq Q^+(f(x))^2$, and so
$$
\log |x| \ll Q^+(f(x))^{32r^3}.
$$
The result follows upon taking logarithms.
\end{proof}

We are now able to prove Theorem~\ref{thm:SparseIter}.

\begin{proof}[Proof of Theorem~\ref{thm:SparseIter}]
Suppose $(m_1,\ldots,m_n) \in M_{F, \bm{x}}(N)$ with $m_i \geq N^{1/2}+2$ for each $i$. Then
$$
f_1^{m_1}(x_1)^{k_1} \cdots f_n^{m_n}(x_n)^{k_n} = 1
$$
for some $k_1,\ldots,k_n \in \Z$, not all 0. Discard the $O(N^{n-1})$ choices of $(m_1,...,m_n)$ for which the multiplicative dependence relation above is of rank 0, and suppose $i$ is such that $k_i \neq 0$. Then each prime $p \mid f_i^{m_i}(x_i)$ divides some other $f_j^{m_j}(x_j)$. Let $Q$ denote the largest square-free factor of $f_i^{m_i}(x_i)$. Then $Q \geq Q^+(g(f_i^{m_i-2}(x_i))$, where $g=\mathrm{rad}(f_i^2)$ is the product of the distinct irreducible factors (over $\Z$) of $f_i^2$. Since $f_i$ is not linear or a monomial, $f_i^2$ has at least 3 distinct roots (for example by \cite[Lemma~3.2]{Y}). Thus $(g,2)$ satisfies the LeVeque condition, and so by Lemma~\ref{lem:LrgSqFree},
\begin{equation} \label{eq:Qbound}
\log Q \gg \log \log |f_i^{m_i-2}(x_i)| \gg N^{1/2}.
\end{equation}
For each $j \neq i$, let $Q_j$ be the largest factor of $Q$ dividing $f_j^{m_j}(x_j)$. Then there is some $j$ with $Q_j \geq Q^{1/n}$. For such a $j$, and each prime $p$ dividing $Q_j$, let $s_{j,p}$ be the least integer such that $p \mid f_j^{s_{j,p}}(x_j)$. Then, by the properties of a rigid divisibility sequence, $m_j$ must be a multiple of $s_j := \mathrm{lcm}_{p \mid Q_j} s_{j,p}$. But $Q_j \mid f_j^{s_j}(x_j)$, so
$$
Q^{1/n} \ll |f_j^{s_j}(x_j)| \ll e^{d_j^{s_j}}.
$$
Hence \eqref{eq:Qbound} gives
$$
s_j \gg \log \log Q \gg \log N,
$$ 
and so there are at most $O(N/\log N)$ possibilities for $m_j$. The result follows from letting the other $m_\ell$ run freely over $[N^{1/2}+2,N]$, and noting that there are only $O(N^{n-1/2})$ vectors $(m_1,\ldots,m_n) \in [1,N]^n$ with some $m_\ell < N^{1/2}+2$.
\end{proof}

\subsection{Primitive prime divisors in dynamical sequences} \label{subsec:PPD}

Let $K$ be a number field or function field, let $f \in K(X)$ be a rational function of degree $d > 1$, and let $x \in K$. Let $\p$ be a finite prime of $K$ and denote by $v_\p$ the valuation on $K$ associated to $\p$. For $m \geq 1$, we say that a prime $\p$ of $K$ is a \emph{primitive prime divisor} of $f^m(x)$ if $\p$ divides $f^m(x)$ i.e. $v_\p(f^m(x)) > 0$, but $v_\p(f^k(x)) \leq 0$ for all $k < m$. It is often the case that for all but finitely many $m$, $f^m(x)$ has a primitive prime divisor. Indeed, as noted above this was one of the motivations for studying polynomials which generate rigid divisibility sequences \cite{R}. More recently, many authors have considered the problem in other cases \cite{GN,GNT,K,IS}. We will make use of an amalgamation of these results. We say that a number field $K$ \emph{satisfies the $abc$-conjecture} if for any $\varepsilon > 0$, there exists a constant $C_{K,\varepsilon}>0$ such that for all $a,b,c \in K^\times$ satisfying $a+b=c$, we have $h(a,b,c) < (1+\varepsilon) \mathrm{rad}(a,b,c) + C_{K,\varepsilon}$ (see \cite[Conjecture~3.1]{GNT} for the precise definitions of the height and radical used here).

\begin{theorem} \label{thm:PPD}
Let $K$ be a number field or characteristic zero function field of transcendence degree 1, let $f \in K(X)$ be a rational function of degree $d > 1$ which is not a monomial, and let $x \in K$ be a point which is not preperiodic for $f$. If $K$ is a number field, assume that either $K$ satisfies the $abc$-conjecture, or that $0$ is periodic for $f$ and $f$ does not vanish to order $d$ at 0. Then for all but finitely many positive integers $m$, there is a prime $\p$ of $K$ such that $\p$ is a primitive prime divisor of $f^m(x)$.
\end{theorem}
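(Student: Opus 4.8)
The plan is to organise the proof as a three-way case analysis matching the hypotheses, with each case reduced to a result already available in the literature (this is what "amalgamation of these results" amounts to), and to isolate the single analytic input behind all three. First the trivial reductions: a monomial $f=aX^d$ is excluded because $f^m(x)$ only ever involves the primes of $a$ and of $x$, so it has no primitive prime divisor for any $m$, and a preperiodic $x$ is excluded because then $\{f^m(x)\}$ takes finitely many values. Set $d=\deg f\ge 2$. Since $f$ is not a monomial and $x$ is not preperiodic, the canonical height $\hat h_f(x)$ is strictly positive, so $h(f^m(x))=d^m\,\hat h_f(x)+O(1)$, with the analogous estimate for the height attached to a function field. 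Writing $\mathfrak a_m$ for the numerator ideal of $f^m(x)$ (the product of $\p^{v_\p(f^m(x))}$ over $\p$ with $v_\p(f^m(x))>0$), the primitive prime divisors of $f^m(x)$ in the sense of the statement are exactly the primes dividing $\mathfrak a_m$ but no $\mathfrak a_k$ with $k<m$; after the standard reductions (conjugating $f$ suitably and discarding the finitely many bad primes, this being where the hypotheses on the behaviour of $f$ at $0$ enter) one has $\log N\mathfrak a_m\gg d^m$.

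Next I would lay out the mechanism common to all cases. A prime $\p$ is non-primitive for $f^m(x)$ precisely when the reduction of the orbit of $x$ modulo $\p$ already passed through $0$ at some step $k<m$; once $\p$ is a prime of good reduction and $0$ is periodic of period $n_0$, this forces $k\equiv m\pmod{n_0}$, so every non-primitive prime of $f^m(x)$ divides $\prod_{k<m,\ k\equiv m\,(n_0)}f^k(x)$, an ideal whose logarithmic norm is $O(d^{m-n_0})$. The one remaining obstruction is that $\mathfrak a_m$ could be so far from squarefree that stripping off these non-primitive primes leaves nothing — and that is exactly what an $abc$-type estimate forbids: applied to a suitable three-term identity built from $f^m(x)$ it yields $\log N\,\mathrm{rad}(\mathfrak a_m)\ge(1-\varepsilon)\log N\mathfrak a_m-O(1)$. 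Combining the three bounds, the radical of the primitive part of $f^m(x)$ has logarithmic norm at least $\big((1-\varepsilon)-o(1)\big)d^m\,\hat h_f(x)$, which is positive once $m$ is large; so that radical is a nontrivial ideal, i.e. $f^m(x)$ has a primitive prime divisor, for all large $m$.

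This is precisely the argument of Gratton, Nguyen and Tucker, giving the theorem over a number field under the $abc$-conjecture \cite{GNT}. Over a characteristic-zero function field of transcendence degree $1$ the same argument is unconditional, since there the required $abc$-estimate is a theorem (the Mason--Stothers inequality, a close relative of Lemma~\ref{lem:ABC}), so the function-field case follows as well \cite{GNT,GN}. In the remaining number-field case — $0$ periodic for $f$ and $f$ not vanishing to order $d$ at $0$ — the extra hypotheses force the ``powerful part'' of $\mathfrak a_m$ to stay bounded directly from the dynamics, so one can dispense with $abc$; this is the content of the unconditional primitive-divisor theorems of Ingram--Silverman \cite{IS} and Krieger \cite{K} (see also \cite{GN}). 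The write-up then consists of checking that each of these statements applies verbatim in the corresponding case, and of handling the edge case in which $0$ itself lies in the orbit of $x$ (which, since $x$ is not preperiodic, happens for at most one $m$).

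The step I expect to be the crux is the control of the powerful part of $\mathfrak a_m$: without $abc$ there is no general mechanism preventing $f^m(x)$ from being an enormous powerful number with tiny radical, which is exactly why the number-field statement must be left conditional. The case ``$0$ periodic, $f$ not vanishing to order $d$ at $0$'' is singled out because there the only place at which the orbit could accumulate fast enough to manufacture such pathological prime powers is $0$ itself, and the periodicity of $0$ together with the non-degeneracy hypothesis there keeps that accumulation in check — this is the observation that makes the unconditional results of \cite{IS,K} applicable.
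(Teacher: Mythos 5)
Your proposal is correct and matches the paper's approach exactly: the theorem is proved by citing \cite[Theorem~1.1]{GNT} (covering the $abc$-conditional number field case and, unconditionally, the function field case) together with \cite[Theorem~7]{IS} (covering the unconditional number field case with $0$ periodic and $f$ not vanishing to order $d$ at $0$). The paper's proof is literally a one-line citation, while you additionally sketch the mechanism inside those references, but there is no divergence in the underlying argument.
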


\begin{proof}
This follows from \cite[Theorem~1.1]{GNT} and \cite[Theorem~7]{IS}.
\end{proof}

We conclude by proving Theorem~\ref{thm:MDZsig}.

\begin{proof}[Proof~of~Theorem~\ref{thm:MDZsig}]
By Theorem~\ref{thm:PPD}, there exists a constant $B>0$, depending only on $f$ and $x$, such that $f^m(x)$ has a primitive prime divisor for all $m > B$. Suppose $(m_1,\ldots,m_n) \in M_{F,\bm{x}}(N)$ is such that $m_i > B$ and $f^{m_i}(x)$ is not a root of unity for each $i$, and $m_i \neq m_j$ for all $i \neq j$. Then
$$
f^{m_1}(x)^{k_1} \cdots f^{m_n}(x)^{k_n} = 1
$$
for some $k_1,\ldots,k_n \in \Z$, not all 0. By construction, the multiplicative relation above is not of rank 0. Hence, taking the largest $m_i$ such that $k_i \neq 0$, each prime $\p$ dividing $f^{m_i}(x)$ must divide some $f^{m_j}(x)$ with $j \neq i$. This is a contradiction since $f^{m_i}(x)$ has a primitive prime divisor. Since there are $O(N^{n-1})$ choices of $(m_1,\ldots,m_n)$ with either $m_i \leq B$ for some $i$, $f^{m_i}(x)$ a root of unity for some $i$, or $m_i=m_j$ for some $i \neq j$, the result follows.
\end{proof}

\Address

\end{document}